\newtheorem{theorem}{Theorem}
\newtheorem{lemma}[theorem]{Lemma}
\newtheorem{prop}[theorem]{Proposition}
\newtheorem{claim}[theorem]{Claim}
\newtheorem{corollary}[theorem]{Corollary}
\newtheorem{observation}{Observation}
\newtheorem*{conjecture}{Conjecture}
\newcommand{\XSays}[3]{{\color{#2}
      {$\rule[-0.12cm]{0.2in}{0.5cm}$\fbox{\tt
            #1:} }%
      \itshape #3
      \marginpar{\color{#2}\tt #1}%
      \def\comment{#3}\def\empty{}\ifx\comment\empty\else
      {$\rule[0.1cm]{0.3in}{0.1cm}$\fbox{\tt
            end}$\rule[0.1cm]{0.3in}{0.1cm}$} \fi
   }%
}
\newcommand{\yop}{{}^{\circ}Y}
\newcommand{\ycl}{{}^{\bullet}Y}
\newcommand{\al}[1]{\begin{align*}#1\end{align*}}
\newcommand{\p}{\mathbb P}
\newcommand{\E}{\mathbb E}
\newcommand{\cs}{\mathcal S}
\newcommand{\I}{\mathcal I}
\newcommand{\taur}{\overset{\rightharpoonup}{\tau}}
\newcommand{\taul}{\overset{\leftharpoonup}{\tau}}
\newcommand{\thetac}{{}^{\bullet}\theta}
\newcommand{\thetao}{{}^{\circ}\theta}
\newcommand\floatc@mybox[2]{\vbox{\hbadness10000
\moveleft3.4pt\vbox{\advance\hsize by6.8pt
\hrule \hbox to\hsize{\vrule\kern3pt
\vbox{\kern3pt\vbox{\advance\hsize by-6.8pt{\@fs@cfont #1} #2}\kern3pt}\kern3pt\vrule}}}}%
\newcommand\fs@mybox{\def\@fs@cfont{\bfseries}\let\@fs@capt\floatc@mybox
\def\@fs@pre{\setbox\@currbox\vbox{\hbadness10000
\moveleft3.4pt\vbox{\advance\hsize by6.8pt
\hrule \hbox to\hsize{\vrule\kern3pt
\vbox{\kern4.5pt\box\@currbox\kern4.5pt}\kern3pt\vrule}\hrule}}}%
\def\@fs@mid{}%
\def\@fs@post{}%
\let\@fs@iftopcapt\iftrue}
\date{\today}
\title{Phase transition for a non-attractive infection process in heterogeneous environment}
\author{Marinus Gottschau}
\address[Marinus Gottschau]{TUM School of Management and Department of Mathematics, Technische Universit\"at M\"unchen, Arcisstra\ss e 21, 80333 M\"unchen, Germany}
\email{marinus.gottschau@tum.de}
\author{Markus Heydenreich}
\author{Kilian Matzke}
\address[Kilian Matzke, Markus Heydenreich]{Mathematisches Institut, Ludwig-Maximilians-Universit\"at M\"unchen, Theresienstra\ss e 39, 80333 M\"unchen, Germany}
\email{matzke@math.lmu.de, m.heydenreich@lmu.de}
\author{Cristina Toninelli}
\address[Cristina Toninelli]{CNRS, Laboratoire de Probabilit\'es et Mod\`eles Al\'eatoires, Univ.\ Paris VI et VII, 
	B\^atiment Sophie Germain, Case courrier 7012, 75205 Paris Cedex 13, France}
\email{cristina.toninelli@upmc.fr}
\thanks{C.Toninelli acknowledges the support by the ERC Starting Grant 680275
  MALIG and  ANR-15-CE40-0020-03} 
\keywords{Contact process, phase transition, survival versus extinction}
\subjclass[2010]{82C22, 82C26}
\begin{document}
\maketitle
\begin{abstract}
We consider a non-attractive three state contact process on $\mathbb Z$ and prove that there exists a regime of survival as well as a regime of extinction. In more detail, the process can be regarded as an infection process in a dynamic environment, where non-infected sites are either healthy or passive. Infected sites can recover only if they have a healthy site nearby, whereas non-infected sites may become infected only if there is no healthy and at least one infected site nearby. The transition probabilities are governed by a global parameter $q$: for large $q$, the infection dies out, and for small enough $q$, we observe its survival. The result is obtained by a coupling to a discrete time Markov chain, using its drift properties in the respective regimes.
\end{abstract}


\section{Introduction}
\subsection{History}

The classical contact process, as introduced by Harris in 1974 \cite{harris74}, has been a central topic of research in interacting particle systems. It is formally defined as $\{0,1\}^{\mathbb Z^d}$-valued spin system, where 1's flip to 0's at rate 1, and flips from 0 to 1 occur at rate 
$\lambda$ times the number of neighbors in state 1, 
where $\lambda>0$ is a parameter of the model. Commonly, the lattice sites are called `individuals', which are either \emph{infected} (i.e., in state~1) or \emph{healthy} (i.e., in state~0).  Many fundamental questions have been settled for this model, the results are summarized in the monographs by Liggett and Durrett in \cite{Durrett88, Liggett85, Liggett99}.

Among the most important results are the existence of a phase transition for survival of a single infected particle, the complete convergence theorem, and extinction of the critical contact process. Much more refined results have appeared in recent years. 
In view of these successes, it may seem surprising that results are considerably sparse as soon as multitype contact processes are considered. Results have only be achieved in very specific situations, examples are the articles by Cox and Schinazi \cite{CoxSchin09}, Durrett and Neuhauser \cite{DurreNeuha97}, Durrett and Swindle \cite{DurreSwind91}, Konno et al.\ \cite{KonnoSchinTanem04}, Neuhauser \cite{Neuhauser92}, and Remenik \cite{Remen08} for various models.

Our focus here is on the contact process with three types, and this carries already severe complications. A fair number of models considered in the literature stems from a biological context (either evolvement of biological species or vegetation models); typical questions that have been considered are coexistence versus extinction and phase transitions. Examples are the work of Broman \cite{Broman07} and Remenik \cite{Remen08}.

There are two features that are shared by all of these models: they are monotonic and they are (self-)dual (we refer to \cite{Liggett99} for a definition of these terms). 
These two properties are crucial ingredients in the analysis; if they fail, then most of the known tools fail. This might be illustrated by looking at Model A in \cite{BergBjornHeyde15}, which is a certain 3-type contact process. Even though there are positive rates for transitions between the various states of this model and apparent monotonicity, the lack of any usable duality relation prevented all efforts in proving convergence to equilibrium for that model. 

For the model considered in the present paper, it appears that there is no duality relation that we can exploit and monotonicity is restricted to a very particular situation only. Yet we are able to prove the occurrence of a phase transition by means of coupling to certain discrete-time Markov chains and analyzing drift properties of these chains. 
We believe that the technique presented here is useful in greater generality. 
A motivation for studying this process stems from the connection with the out of equilibrium dynamics of kinetically constrained models, as we will explain in detail in Section \ref{discussion}.  We believe that the proof techniques apply in similar situations. 


\subsection{The model}

Our state space is $\Omega=\{0,1,2\}^{\mathbb Z}$, equipped with the product topology (which makes $\Omega$ compact). Further, $q\in[0,1]$ is a parameter and $(\eta_t)_{t\ge 0}$ is a Markov process on $\Omega$. We say that at time $t$,
\al{ \text{site }x \text{ is} \begin{cases}
\emph{healthy} &\mbox{if } \eta_t(x)=0, \\ \emph{passive} &\mbox{if } \eta_t(x)=1 \text{ and} \\ \emph{infected} &\mbox{if } \eta_t(x)=2.
\end{cases}}
Informally, we can describe the process as follows. Each site $x$ independently waits an exponential time with intensity 1 and then updates its state according to the following rules:
\begin{itemize}
	\item If at least one neighboring site is healthy, then $x$ becomes healthy with probability $q$ and passive w.p.~$1-q$.
	\item If at least one neighbor is infected and none is healthy, then a previously healthy $x$ becomes infected w.p.~$1-q$, a previously passive $x$ becomes infected w.p.~$q$ and remains in its state otherwise.
\end{itemize}
For a more formal description, the process can be characterized by its probability generator, which is the closure of the operator 
\al{\mathcal Lf(\eta) =   \sum_{x \in \mathbb Z} \Big[ & c_x(\eta) q (f(\eta^{x,0})- f(\eta))  + c_x(\eta) (1-q) (f(\eta^{x,1})- f(\eta)) \\
					&+ \bar c_x(\eta) \left[ q \, \mathds 1_{\{\eta(x) = 1 \}} + (1-q) \mathds 1_{\{\eta(x) =0 \}}\right] (f(\eta^{x,2})- f(\eta)) \Big],} 
\[f\in \big\{f:\Omega\to\mathbb R\text{ cont.}\colon \lim_{x\to\infty}\sup\{|f(\eta)-f(\eta')|\colon \eta,\eta'\in\Omega, \eta(y)=\eta'(y) \text{ for all }y\neq x\}=0\big\}.\]
Here, $c_x(\eta) = \mathds 1_{\{ \eta(x-1) \cdot \eta(x+1) =0 \}}$ and $\bar c_x(\eta) = \mathds 1_{\{ \eta(x-1) \cdot \eta(x+1) \geq 2\}}$. Furthermore, $\eta^{x,i}$ is the configuration where $\eta^{x,i}(y) = \eta(y)$ for all $y \neq x$ and $\eta^{x,i}(x) = i$, $x,y \in \mathbb Z, i \in \{0,1,2\}$. 

For an initial configuration $\eta\in\Omega$, we denote by $\mathbb P^\eta$ the corresponding probability measure. This superscript will be dropped for the sake of convenience if context permits.

As we wrote earlier, monotonicity is an important tool in the analysis of such processes. One monotonicity property the $(\eta_t)$ process exhibits is the following. 
\begin{claim} \label{claim:monoinf}
For arbitrary $\eta \in \Omega$ and $x \in \mathbb Z$, we have that
		\al{\p^{\eta'} \left[\eta_t \notin \{0,1\}^{\mathbb Z} \textrm{ for all } t \geq 0 \right] \geq \p^{\eta''} \left[\eta_t \notin \{0,1\}^{\mathbb Z} \textrm{ for all } t \geq 0\right],}
where $\eta' = \eta^{x,2}$ and $\eta'' \in \{ \eta^{x,1}, \eta^{x,0} \}$. 
\end{claim}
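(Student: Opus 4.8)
The plan is to prove the inequality by a coupling argument built on the graphical (Harris) representation. Write $\eta' = \eta^{x,2}$ and fix $\eta'' \in \{\eta^{x,0},\eta^{x,1}\}$. I would construct, on one probability space, two copies $(\xi_t)_{t\ge 0}$ and $(\zeta_t)_{t\ge 0}$ of the process with $\xi_0=\eta'$ and $\zeta_0=\eta''$, driven by the same family of rate-$1$ Poisson clocks (one per site) together with i.i.d.\ uniform marks $U\in[0,1]$ attached to the clock rings. When the clock at a site $z$ rings, both coordinates at $z$ are updated simultaneously as a deterministic function of $U$ and of the joint local states $(\xi(z-1),\xi(z),\xi(z+1))$ and $(\zeta(z-1),\zeta(z),\zeta(z+1))$. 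Since all rates are bounded and the interaction is nearest-neighbour, this defines a legitimate Markov process on $(\{0,1,2\}^2)^{\mathbb Z}$ whose two marginals are copies of $(\eta_t)$. As the event in the statement is precisely that an infected site is present at all times, it suffices to design the coupling so that $\zeta$ can survive only if $\xi$ does.

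The invariant I would propagate is that the discrepancy set $D_t=\{y:\xi_t(y)\neq\zeta_t(y)\}$ consists only of sites at which $\xi_t(y)=2$ and $\zeta_t(y)\in\{0,1\}$; equivalently, at every site the pair $(\xi_t(y),\zeta_t(y))$ lies in $\{(0,0),(1,1),(2,2),(2,0),(2,1)\}$. This holds at $t=0$ by the choice of initial conditions, and it immediately yields that every infected site of $\zeta_t$ is also infected in $\xi_t$; hence $\{\zeta\text{ survives}\}\subseteq\{\xi\text{ survives}\}$, and the claim follows by taking probabilities. The whole argument thus reduces to exhibiting, at each clock ring, a coupling of the two single-site updates that preserves this invariant.

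To carry this out I would classify the update at a site $z$ into three modes according to the neighbours: \emph{heal} ($c_z=1$, some neighbour is $0$), \emph{infect} ($\bar c_z=1$, no neighbour $0$ and some neighbour $2$), and \emph{idle} (both neighbours passive). The invariant forces strong constraints: since $\xi(w)=0$ entails $\zeta(w)=0$, whenever $\xi$ heals so does $\zeta$; and since $\xi(w)=1$ entails $\zeta(w)=1$, whenever $\xi$ is idle so is $\zeta$. Consequently the only admissible mode pairs $(\text{mode of }\xi,\text{mode of }\zeta)$ are $(H,H)$, $(I,I)$, $(N,N)$, $(I,N)$ and $(I,H)$. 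In the three matched pairs, using the same mark $U$ for both updates preserves the invariant routinely (when the shared current state is equal the outcomes stay equal, and when $\xi(z)=2,\ \zeta(z)\in\{0,1\}$ the value $\xi(z)$ can only stay $2$ while $\zeta(z)$ moves within $\{0,1,2\}$). The pair $(I,N)$ is equally harmless, since there $\zeta(z)$ is frozen and $\xi(z)$ can only move up to $2$.

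The one delicate case, and the main obstacle, is $(I,H)$: here $\zeta$ heals (so $\zeta(z)$ becomes $0$ or $1$) while $\xi$ tries to infect (so $\xi(z)$ may become $2$). If the current states differ, then $\xi(z)=2$ and there is nothing to do. The problem is when $\xi(z)=\zeta(z)\in\{0,1\}$, where a product coupling could produce the forbidden outcome $\xi(z)\in\{0,1\}$ with $\zeta(z)\neq\xi(z)$. Here I would use a tailored, non-product coupling of the single mark $U$ exploiting $q+(1-q)=1$: for the common value $0$, set $\xi(z)\mapsto 0,\ \zeta(z)\mapsto 0$ when $U<q$ and $\xi(z)\mapsto 2,\ \zeta(z)\mapsto 1$ otherwise; for the common value $1$, set $\xi(z)\mapsto 1,\ \zeta(z)\mapsto 1$ when $U<1-q$ and $\xi(z)\mapsto 2,\ \zeta(z)\mapsto 0$ otherwise. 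One checks directly that each marginal reproduces the correct single-site transition probabilities and that in every outcome the pair $(\xi(z),\zeta(z))$ again lies in the admissible set. This closes the induction over clock rings, the invariant holds for all $t$, and the asserted inequality follows. The non-monotonicity of the process is exactly what makes this step subtle: there is no global order-preserving coupling, and one must verify by hand that a single extra infection can be propagated in a one-sided way.
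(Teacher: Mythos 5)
Your proof is correct and follows essentially the same route as the paper: the paper's own (one-line) argument asserts exactly your invariant, namely that under the coupling $\eta'_t(y)\in\{\eta''_t(y),2\}$ for all $y$ and $t$, and leaves the verification to the reader. Your write-up simply supplies the details the paper omits, including the correct non-product coupling of the marginals in the one delicate case where $\eta'$ attempts to infect while $\eta''$ heals.
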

In words, additional infected sites cannot decrease the chance of the infection's survival. However, the same is not necessarily true anymore for $\eta'= \eta^{x,1}$ and $\eta'' = \eta^{x,0}$.
\begin{proof}
	If we couple the two processes with respective initial measures, we claim that, almost surely, $\eta'_t(x)\in \{\eta''_t(x),2\}$ for all $t\geq 0$ and $x\in \mathbb Z$. This is a consequence of the definition of the dynamics and corresponding transition rates.
\end{proof}

\subsection{Results and discussion}\label{discussion}
Our main result is a phase transition for $(\eta_t)$ in the parameter $q$: if $q$ is very close to $0$, then any number of initially infected sites survives with positive probability, whereas if $q$ is close to $1$, then the infection dies out with probability $1$.

\begin{theorem} \label{mainthm}
There exist values $0<q_0 <q_1<1$ such that
\begin{enumerate}
	\item[(i)] for any initial configuration $\eta \notin \{0,1\}^{\mathbb Z}$, we have
			\al {\p^\eta \left[\eta_t \notin \{0,1\}^{\mathbb Z} \textrm{ for all } t \geq 0\right] >0 & \qquad \textrm{for all } q \leq q_0,}
	\item[(ii)] and for any initial configuration $\eta$ with $\sup_{x \in \mathbb Z} \inf_{y \in \mathbb Z}\{|x-y|: \eta(y)=0\} < \infty$, we have
			\al{\p^\eta \left[\eta_t \in \{0,1\}^{\mathbb Z}\right] \xrightarrow{t \to \infty} 1 \textrm{ a.s.} & \qquad \textrm{for all } q \geq q_1.}
\end{enumerate} 

\end{theorem}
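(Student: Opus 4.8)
The plan rests on two elementary but decisive features of the generator. A site can enter state $0$ only through the term $c_x(\eta)\,q\,(f(\eta^{x,0})-f(\eta))$, which is active only when $c_x(\eta)=1$, i.e.\ when $x$ already has a healthy neighbour; hence healthy sites are created only next to existing healthy sites, and a configuration devoid of $0$'s never produces one. Moreover, an infected site with no healthy neighbour is \emph{frozen}: if $\eta(x)=2$ and $\eta(x\pm1)\in\{1,2\}$, then $c_x(\eta)=0$ and the coefficient $q\,\mathds 1_{\{\eta(x)=1\}}+(1-q)\,\mathds 1_{\{\eta(x)=0\}}$ of the $\bar c_x$-term vanishes, so $x$ cannot change. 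Thus the infection can be harmed only at an interface with the healthy sites, and an infected block shielded from all $0$'s by passive sites survives forever. The whole theorem reduces to controlling, at each such interface, the competition between the infected front and the nearest healthy site, which I will couple to a discrete-time random walk whose drift changes sign with $q$.

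For part (i) I first invoke Claim~\ref{claim:monoinf} to pass to a worst case: turning an infected site into a passive one only lowers the survival probability, so if $x_0$ is infected under $\eta$ and $\bar\eta$ is obtained by replacing every \emph{other} infected site by a passive one, then $\p^\eta[\cdot]\ge\p^{\bar\eta}[\cdot]$, and it suffices to prove survival with positive probability for a single infected site at $x_0$. On each side of $x_0$ I track the gap $G_t$ to the nearest healthy site. Inspecting the rules, as long as a passive buffer separates $x_0$ from that healthy site, the boundary healthy site turns passive (so $G_t$ increases) at rate $\approx 1-q$, while $G_t$ decreases only at rate $O(q)$ (a passive site next to the healthy region is recovered to $0$). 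Recording the successive moves of $G_t$ as a discrete-time Markov chain gives drift $1-O(q)$, strictly positive for $q\le q_0$; a random walk with positive drift escapes to $+\infty$ without returning below its start with positive probability. Combined with a positive chance of first leaving the danger zone $G_t=1$ (forming a buffer before $x_0$ recovers), this makes both gaps grow to infinity with positive probability, leaving $x_0$ frozen for all time.

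For part (ii) I use Claim~\ref{claim:monoinf} in the reverse direction: adding infected sites only raises the survival probability, so I may replace every passive site by an infected one and assume $\eta\in\{0,2\}^{\mathbb Z}$, the $0$'s still obeying the bounded-gap hypothesis. Now consecutive healthy sites enclose an interval of infected sites of bounded length. For $q\ge q_1$ each healthy site is robust---it turns passive only at rate $1-q$, and a $0$ flanked by infection is itself infected only at rate $1-q$---whereas an infected site at an interface recovers to $0$ at rate $\approx q$. The same interface chain now has negative drift, of order $-(1-q)$, so the healthy fronts advance into each interval; being of bounded length it is cleared of infection in almost surely finite time, and robustness of the healthy barriers prevents re-infection from neighbouring intervals. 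This yields almost sure local extinction, and since the intervals have uniformly bounded length the cleaning is uniform enough to give the stated convergence.

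The principal difficulty throughout is the lack of attractiveness: apart from the one-sided comparison of Claim~\ref{claim:monoinf} there is no global monotone coupling, and the interface is not literally a random walk. Passive sites carry memory, the rate at which the nearest healthy site recedes depends on whether the healthy region is a solid block or an isolated wandering $0$ (an isolated $0$ does not recede, but when it meets the infection it is itself infected at rate $1-q$), and the boundary values $G_t\in\{1,2\}$ require separate bookkeeping. The heart of the proof is therefore to set up the discrete-time chain so that it provably dominates, in the right direction, the true dynamics in every local configuration, and to check that its drift keeps the stated sign uniformly, so that the clean random-walk conclusions transfer back to $(\eta_t)$.
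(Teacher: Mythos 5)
Your high-level framing (interface competition, coupling to a discrete walk whose drift changes sign with $q$) matches the paper's strategy, but the concrete mechanism you propose for part (i) does not work, and the gap is fatal. You want to keep a single infected site $x_0$ frozen forever by letting the gap $G_t$ to the nearest healthy site grow to infinity, claiming $G_t$ increases at rate $\approx 1-q$ and decreases at rate $O(q)$. The increase rate is wrong: the nearest healthy site $y$ can turn passive only when $c_y(\eta)=1$, i.e.\ only when $y$ itself has a \emph{healthy} neighbour. Once $y$ is an isolated $0$ between two passive sites (which is the generic situation for small $q$, and occurs already in legitimate initial configurations), one has $c_y=\bar c_y=0$, so $y$ is itself frozen and the gap can \emph{never} increase past it; meanwhile $y-1$ turns healthy at rate $q$, so $G_t$ only decreases. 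You flag this ("an isolated $0$ does not recede") but your resolution --- "when it meets the infection it is itself infected" --- silently abandons the frozen-site mechanism and switches to the infection \emph{advancing}, which is a different claim requiring a different (and much more delicate) analysis: when $G_t=1$ the infected front site has an active healthy neighbour and is killed at rate $1$ upon its next update, while the healthy site is infected only with probability $1-q$ and only if its other neighbour is not healthy. Survival genuinely requires showing the front $\I(t)$ has positive drift through this repeated interface battle; this is exactly what the paper's Section~2 does, via an auxiliary Markov chain on a four-site window to the right of the front, explicit path-counting bounds on the progress probabilities (Lemmas~\ref{lem:smallqonestep}--\ref{lem:smallqdrift}), and a coupling through ``stable windows'' (Lemma~\ref{lem:coupling}). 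None of that is replaced by anything in your sketch.

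Part (ii) has an analogous gap. The reduction via Claim~\ref{claim:monoinf} to $\eta\in\{0,2\}^{\mathbb Z}$ is fine in principle (modulo iterating the one-site claim over infinitely many sites), but the assertion that the healthy barriers are ``robust'' and ``prevent re-infection from neighbouring intervals'' is not substantiated: a healthy site flanked by infection on both sides is destroyed at rate $1-q>0$, so over an infinite time horizon barriers are breached unless the infection dies first --- which is the very thing to be proved. The actual content of the paper's argument is a quantitative comparison of the expected lengths of progressive versus regressive excursions of the front (Lemma~\ref{lem:largeqdrift}), supported by the estimate that the distance to the nearest healthy site does not grow in expectation (Lemma~\ref{lem:constpstrips}); your ``each bounded interval is cleared in finite time'' step also does not by itself yield the stated convergence, since clearing must be controlled simultaneously across all intervals. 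In short: your two structural observations about the generator (healthy sites are born only next to healthy sites; an infected site with no healthy neighbour cannot flip) are correct and useful, but the drift computations on which both halves of your argument rest are asserted for local configurations in which they are false, and repairing them leads you back to the paper's Markov-chain construction.
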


We thus prove the existence of different regimes without relying on duality properties. Since there is no monotonicity that can be exploited here, we can not rule out that there are more than one transitions between the regimes ``the infection dies out'' and ``the infection survives''. However, we conjecture the following statement to be true.

\begin{conjecture}
The function $q \mapsto \p^\eta \left[\eta_t \notin \{0,1\}^{\mathbb Z} \textrm{ for all } t \geq 0\right] $ is decreasing in $q\in(0,1]$.
\end{conjecture}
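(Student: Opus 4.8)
The plan is to establish the monotonicity through a coupling built on a single graphical representation, after reducing the tail event to a monotone limit of finite-horizon extinction probabilities. First observe that $\{0,1\}^{\mathbb Z}$ is absorbing: if no site is infected then $\bar c_x \equiv 0$, so no site can ever re-enter state $2$. Writing $\tau = \inf\{t \ge 0 \colon \eta_t \in \{0,1\}^{\mathbb Z}\}$, the survival probability equals $1 - \p^{\eta}[\tau < \infty] = 1 - \lim_{T \to \infty}\p^{\eta}[\tau \le T]$, so it suffices to prove that $q \mapsto \p^{\eta}_{q}[\tau \le T]$ is non-decreasing for each fixed $T$; monotonicity then passes to the limit.

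Next I would realise all three $q$-dependent transitions from common noise. Attach to each site a rate-$1$ Poisson clock and, at every ring, an independent uniform mark $U \in [0,1]$, and update the site by the deterministic rule: in Case~A (some neighbour healthy) set the state to $0$ if $U < q$ and to $1$ otherwise; in Case~B with current state $0$ set it to $2$ if $U < 1-q$; in Case~B with current state $1$ set it to $2$ if $U < q$. Run two copies at parameters $q \le \tilde q = q + \varepsilon$ from the same configuration off this shared noise. The target order is the coordinatewise order with $2$ on top and $0,1$ below it: the survival event is increasing for this order (if $\tilde\eta_t \preceq \eta_t$ pointwise and $\tilde\eta_t$ carries an infected site, then so does $\eta_t$), and Claim~\ref{claim:monoinf} is exactly the assertion $2 \succ 0$ and $2 \succ 1$.

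The hard part, and the reason the statement is only a conjecture, is that this order cannot be propagated, and the obstruction is forced by the incomparability of $0$ and $1$ noted after Claim~\ref{claim:monoinf}. Concretely, a Case-A mark $U \in [q,\tilde q)$ sends the $q$-site to state $1$ and the $\tilde q$-site to state $0$, producing a passive versus a healthy site that the dynamics treat incomparably, so the coupling immediately leaves the ordered set. The same phenomenon appears analytically: differentiating $\p_q[\tau \le T]$ through the generator yields a Margulis--Russo-type formula in which $\partial_q \mathcal L$ contributes three pivotal terms --- in Case~A mass shifts from the outcome $1$ to the outcome $0$; in Case~B the rate of $0 \to 2$ decreases; in Case~B the rate of $1 \to 2$ increases --- each weighted by the resulting change in the conditional extinction probability. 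The first two contributions carry the right sign (more healthy sites, and fewer infections out of healthy sites, both favour extinction), but the third, the creation of infected sites out of passive ones, favours survival and has the wrong sign.

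Proving the conjecture thus amounts to showing that this positive contribution is always dominated by the other two, and here neither a monotone coupling nor a duality is at hand. The route I would pursue is a pathwise repair coupling: whenever the additional mass $\varepsilon$ converts a passive site to infected via the $1 \to 2$ channel, one must exhibit a compensating recovery or healthy-creation, driven by the same excess mass on the same or a neighbouring clock ring, so that the net effect on the event $\{\tau \le T\}$ is non-negative. Designing such a local compensation and showing it is globally consistent across the graphical representation, in the absence of attractiveness, is the genuine difficulty and the crux of the problem.
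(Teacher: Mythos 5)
This statement is an open \emph{conjecture} in the paper: the authors give no proof of it, and they say explicitly that ``since there is no monotonicity that can be exploited here, we can not rule out that there are more than one transitions'' between the regimes. Your proposal, to its credit, is honest about this --- it is not a proof but a diagnosis of why the natural approaches fail, and as such it cannot be compared against a proof in the paper because none exists.

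Your diagnosis itself is accurate and consistent with the paper. The reduction of the tail event to a monotone limit of finite-horizon probabilities is fine, the graphical construction is the standard one, and you correctly locate the two obstructions: (a) a Case-A mark in $[q,\tilde q)$ produces a healthy site in one copy and a passive site in the other, and these two states are incomparable for the survival order --- exactly the failure of monotonicity the paper flags after Claim~\ref{claim:monoinf}; (b) in the Russo-type decomposition of $\partial_q$, the $1\to 2$ channel (passive sites become infected at rate $q$, increasing in $q$) pushes in the direction of survival, so the derivative is not manifestly of one sign. Both points are correct and are precisely why the authors resort to the drift/coupling arguments of Sections~\ref{sec:smallq} and~\ref{sec:largeq} to prove only the existence of the two regimes (Theorem~\ref{mainthm}) rather than monotonicity. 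The ``repair coupling'' you sketch in the last paragraph is only a research direction, not an argument, so the conjecture remains open after your proposal just as it does in the paper; but nothing you assert along the way is wrong.
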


This would imply a critical value $q_c$ such that if $q<q_c$ the infection survives with positive probability, while if $q > q_c$ the infection dies out with probability $1$.

Note that the case $q=0$ is degenerate and of little interest, as it admits traps: If there is a site $x \in \mathbb Z$ and a time $t \geq 0$ such that we exhibit $(\eta_t(x), \eta_t(x+1), \eta_t(x+2)) = (1,0,1)$, then this triple will remain fixed for all $t' \geq t$.

A very related process to the one just introduced is the simpler version for which, informally, the second condition is altered to: ``If at least one neighbor of $x$ is infected and none is healthy, then $x$ becomes infected.'' It is clear that the set of infected sites in this version dominates our process. However, the same proof techniques used below yield similar results to Theorem~\ref{mainthm} (namely, also a phase transition).

\subsection*{Connections to kinetically constrained models.}
This model has an indirect connection with  Frederickson-Andersen 1 spin facilitated model (FA1f) \cite{fa1f, fa1fa, fa1fb}.
In this case, the configuration space is $\{0,1\}^{\mathbb Z}$ and the dynamics are defined as follows: a site $x$ with occupation variable $0$ flips to $1$ at rate $1-q$ iff at least one among its nearest neighbors is in state zero; a site $x$ with occupation variable $1$ flips to $0$ at rate $q$ iff at least one among its nearest neighbors is in state zero. Note that the constraint for the $0\to 1$ and the $1\to 0$ updates are the same and the dynamics satisfies detailed balance w.r.t.~the product measure $\mu$ with $\mu(\eta(x)=0)=q$.
Note also that the dynamics of our contact process coincide with the FA1f dynamics if we start from a configuration which does not contain infected sites.

A non trivial problem for FA1f dynamics is to determine convergence to the equilibrium measure $\mu$ for some reasonable initial measure, e.g.~an initial product measure with density of healthy sites different from $q$ \cite{fa1f}. We will now explain how our results provide an alternative approach to prove convergence to equilibrium in a restricted density regime. A possible strategy to prove convergence to equilibrium for FA1f dynamics started from an initial configuration  $\eta_0$ is to couple it with some $\tilde \eta_0$ distributed according to $\mu$. This gives rise to  a process with 4 states $\{0,1,2^{\downarrow},2^{\uparrow}\}$. Here, $0$ represent sites where both configurations are $0$;
$1$ sites where both configurations are $1$; $2^{\downarrow}$ sites where $\eta$ is $0$ and $\tilde\eta$ is 1; and $2^{\uparrow}$
 sites where $\eta$ is $1$ and $\tilde\eta$ is $0$. 
If we now denote the union of sites in state $2^{\downarrow}$ and  $2^{\uparrow}$ as "infected sites", then if infection dies out, the original process started in $\eta_0$ is distributed with the equilibrium measure (since there are no more discrepancies with the process evolved from $\tilde\eta$ which is at equilibrium at any time). 
It is not difficult to verify that the dynamics of the $4$ state contact process induced by the standard coupling among two configurations evolving with FA1f dynamics are such that the union of sites in state $2^{\downarrow}$ and $2^{\uparrow}$ is dominated by the infected sites of our $3$-state contact process.
Thus when infection dies out for our process it also dies out for the $4$-state contact process and from our  
Theorem~\ref{mainthm} (ii) we get convergence to equilibrium for $q\geq q_1$ for the FA1f dynamics.
This result was already proven by a completely different technique in Blondel et al.\ \cite{fa1f} for parameter $q > 1/2$.
Notice that convergence to equilibrium is expected to hold for FA1f dynamics at all $q>0$ starting from $\eta$ satisfying the hypothesis of our Theorem~\ref{mainthm}~(ii), namely infection should always disappear in the $4$ state contact process. This is certainly not the case for our $3$ state contact process which has a survival extinction transition, as proved by  Theorem~\ref{mainthm}~(i).


\section{The small $q$ regime} \label{sec:smallq}

In this section, we prove assertion (i) of Theorem~\ref{mainthm}. First, We define	
		\al{\Omega^* = \{ \eta \in \Omega: \exists \, a \leq b \in \mathbb Z: \{x: \eta(x)=2\} = [a,b] \cap \mathbb Z \},}
the set of configurations where infected sites form a finite, nonempty interval.
\begin{prop} \label{thm:smallq}
Consider some $\eta \in\Omega^*$. Then there exists $0<q_0$ such that
			\al {\p^\eta \left[\eta_t \notin \{0,1\}^{\mathbb Z} \textrm{ for all } t \geq 0\right] >0 & \qquad \textrm{for all } q \leq q_0,}
\end{prop}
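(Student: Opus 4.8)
The goal is to show that starting from a configuration whose infected sites form a finite interval $[a,b]$, the infection survives forever with positive probability for all small enough $q$. My plan is to track the spatial extent of the infected region and to show that, when $q$ is small, this region has a drift that keeps it from collapsing to the empty set. The natural quantities to follow are the left endpoint $L_t = \inf\{x : \eta_t(x) = 2\}$ and the right endpoint $R_t = \sup\{x : \eta_t(x) = 2\}$ of the infected block, together with the length $R_t - L_t$. I would first argue that it is enough to understand the motion of, say, the right boundary $R_t$ in isolation, because by the symmetry of the dynamics under reflection $x \mapsto -x$ the left boundary behaves identically, and by the monotonicity in Claim~\ref{claim:monoinf} extra infected sites only help survival, so I may as well analyze the front of a semi-infinite infected block and couple it below the true process.

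The key step is to extract from the continuous-time dynamics an \emph{embedded discrete-time Markov chain} that records what happens at the infection boundary each time a relevant site updates. At the right front, the site $R_t + 1$ (which is non-infected with an infected left neighbor) can become infected, advancing the front, provided its right neighbor is not healthy; meanwhile the boundary infected site $R_t$ can be pushed back toward healthy if a healthy site appears adjacent to it. The transition probabilities of these boundary moves are governed by $q$: advancing the front costs a factor like $1-q$ (a healthy neighbor turning infected) or $q$ (a passive neighbor turning infected), while the creation of healthy sites that would eat into the infected block is itself an event of probability $q$ per relevant update. So for small $q$ the front should have a strong outward drift, because healthy sites—the only agents that can destroy infection—are produced only at rate proportional to $q$. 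I would make this precise by defining a discrete-time chain on a reduced state (for instance the signed distance between the front and the nearest healthy site, or simply the length of the infected interval) and computing its one-step drift, showing it is positive for $q \le q_0$.

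With the drift in hand, the plan is to invoke a standard random-walk / supermartingale argument: a discrete-time chain on $\mathbb{Z}_{\ge 0}$ (the length of the infected interval, or the position of the right front relative to the left front) with uniformly positive drift away from $0$ and bounded increments is transient, so with positive probability it never returns to $0$, i.e.\ the infected interval never empties. One must be careful that the event ``$\eta_t \in \{0,1\}^{\mathbb Z}$'' corresponds exactly to the chain hitting the absorbing state $0$, and that survival of the chain transfers back to survival of the continuous-time process; here I would use that there are only finitely many infected sites initially, so the embedded chain is well defined and the absorbing state is reachable only through the chain's origin. A Foster–Lyapunov type estimate, or directly a comparison with a biased random walk that escapes to $+\infty$ with positive probability, gives the result.

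The main obstacle I anticipate is the \emph{non-attractiveness and lack of a clean monotone structure}: the interior of the infected interval is not inert, since passive and healthy sites can be created inside and a healthy site created in the middle can split or erode the block, so the length is not a simple birth–death chain and its increments depend on the detailed configuration near the boundary rather than on a single coordinate. Controlling this requires showing that, for small $q$, healthy sites are rare enough that the infected block stays essentially connected and the boundary analysis is not spoiled by interior events—equivalently, that the probability of an internal healthy site appearing and propagating before the front has drifted safely away is small. Making the embedded chain genuinely Markov (or dominating it by one that is) while retaining a provably positive drift uniformly in the configuration is the delicate part, and I expect the bulk of the work to go into defining the right reduced observable and bounding the bad interior events by something of order $q$.
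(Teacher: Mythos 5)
Your overall strategy --- extract an embedded discrete-time chain at the infection front, show it has positive drift for small $q$, and conclude by comparison with a transient random walk --- is exactly the route the paper takes, and you correctly identify that the whole difficulty lies in making the front observable Markovian and proving the drift. But the proposal defers precisely that content, and two of the steps you do commit to would fail. First, the reduction to a semi-infinite infected block via Claim~\ref{claim:monoinf} goes the wrong way: that claim says more infection gives a \emph{larger} survival probability, so outward drift of the front of a semi-infinite block tells you nothing about the finite interval you actually start from. The paper instead works directly with the finite interval, using that the motion of $\I(t)=\sup\{x:\eta_t(x)=2\}$ depends only on the sites to its right once the infected cluster has size at least two, and treating the size-one case by a separate positive-probability recovery step. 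Second, ``computing its one-step drift, showing it is positive'' is not enough here: in the paper's auxiliary chain $Y$ on $\mathbb Z\times\{0,1\}^3$ (rightmost infected site plus its three right neighbours), every progressive step lands in the state $(n+1,1,0,1)$, and the available lower bound on the probability of progressing again from that state only approaches $\tfrac12$ as $q\to 0$ (Lemma~\ref{lem:smallqonestep}), so a one-step computation is inconclusive. The positive drift only emerges from a two-level-change analysis (Lemma~\ref{lem:smallqdrift}), comparing double progress with double regress and exploiting that a regressive step deposits the chain in states from which progress is strictly more likely than $\tfrac12$. A one-step drift computation on the length of the infected interval, or on the distance to the nearest healthy site, would not see this.

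The remaining gap is the one you flag yourself: the front-plus-neighbourhood process is not Markov, because the site just outside the observed window acts as a changing boundary condition. The paper resolves this by introducing ``stable windows'' between updates of the boundary site $\I(\cdot)+4$, within which the restricted dynamics are Markovian, and by proving (Lemma~\ref{lem:coupling}) via an explicit path-weight comparison that in each window the true front chain $X$ stochastically dominates the auxiliary chain $Y$ under either boundary condition, with the domination also preserved across window boundaries. Without this (or an equivalent domination), the drift computed for any simplified chain does not transfer to the actual process, so your plan as stated stops short of a proof.
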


For the proof, we observe first that the set of sites in state 2, which we call the \emph{infected cluster}, is always connected. We would like to focus on the behavior of the infected boundary sites and so, due to symmetry, on \al{\I(t) := \sup \{x \in \mathbb Z: \eta_t(x)=2 \},} the position of the rightmost infected site. If there is only one infected site (thus, leftmost and rightmost infected site coincide), both with positive probability the next change in number of infected sites might result in zero (extinction of the infection) or two infected sites. If the number of infected sites is at least two, only the status on the sites to the right of the rightmost infected site have direct influence on the `movement' of $\I(t)$.

In (an informal) summary, if the infection shrinks to size one, it recovers with positive probability to size at least two. If we show that from there, infection spreads with positive probability, we obtain our result. Therefore, we focus on this latter regime in the following.

With this in mind, we now introduce a Markov chain, which can be interpreted as a simplified model of the rightmost infected site and its local right neighborhood, and prove a drift property for it. This shall turn out to be useful when coupling this auxiliary Markov chain to our original process in section \ref{coupling}. 

\subsection{An auxiliary Markov chain}

We define a Markov chain $(Y_i)_{i \geq 0}$ living in the (countable) state space $\cs = \mathbb Z \times \{0,1\}^3$. We denote its first coordinate as the chain's \emph{level} or \emph{state} and thus can partition $\cs$ into its $n$-states
	\al{\cs_n:=\{ (\omega_1, \omega_2, \omega_3, \omega_4) \in \cs: \omega_1 = n\}}
for any integer $n$. The Markov chain is defined by its transition graph shown in Figure~\ref{fig:ychain}. The subgraphs induced by $\cs_n$ are isomorphic, and furthermore, two states from $\cs_n$ and $\cs_m$ for $|m-n| \geq 2$ have transition probability zero. 
Hence, for simplicity, we can restrict ourselves to depicting the transition graph induced by $\cs_n$, with additional states in $\cs_{n \pm 1}$ along with their respective transition probabilities.
We denote the probability measure of this Markov chain by $\p$ 
(we trust that this causes no confusion with the measure of the interacting particle system). 


\begin{figure}
\centering
  \includegraphics[width=\textwidth]{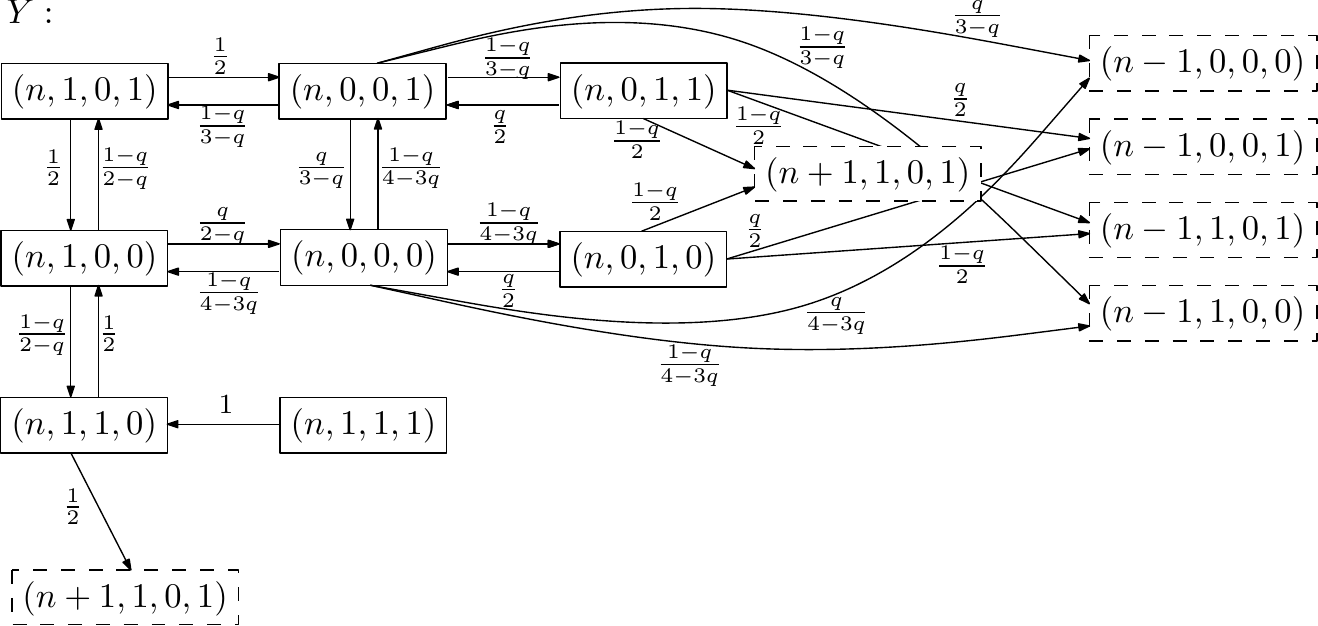}
  \caption{Transition subgraph of $Y$ induced by $\cs_n$ and its neighboring states.}
  \label{fig:ychain}
\end{figure}

We define the stopping time $\tau$ to be the first time the Markov chain changes its level:
		\al{\tau := \min\{i \in \mathbb N: \exists n \in \mathbb Z: Y_0 \in \cs_n, Y_i \in \cs_{n\pm 1} \}.}
Using $Y_i^m$ (for $0 \leq m \leq 3$) to access the $m$th component of the state which $Y$ is in at time $i$, say that $Y_{i}$ is a progressive step (progress) if $Y^1_i = Y^1_{i-1} + 1$ and similarly call $Y_i$ a regressive step (regress) if $Y^1_i = Y^1_{i-1} - 1$. We say that a natural number $i$ is a step time (step) if $Y_i$ is either a progressive or a regressive step.

\begin{lemma}\label{lem:smallqonestep}
Let $n \in \mathbb Z$ and $\varepsilon>0$. Then there exists $0<q_0<1$ such that
		\al{\frac{1}{2} - \varepsilon &< \p[Y_\tau \in \cs_{n+1} \mid Y_0 = (n,1,0,1)] < \frac{4}{7} + \varepsilon , \\
				\frac{2}{3} - \varepsilon &< \p[Y_\tau \in \cs_{n+1} \mid Y_0 = (n,1,0,0)] }
for all $q<q_0$.
\end{lemma}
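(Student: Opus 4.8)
The plan is to treat this as a first-passage (absorption) problem for the finite chain obtained by quotienting out the level. By the stated isomorphism of the subgraphs induced by the $\cs_n$ and the fact that transitions only connect levels differing by at most one, the quantity $\p[Y_\tau \in \cs_{n+1} \mid Y_0 = (n,\omega)]$ does not depend on $n$; I would write it as $h_q(\omega)$ for $\omega \in \{0,1\}^3$, recording the explicit dependence on the parameter $q$. Reading the weights off Figure~\ref{fig:ychain}, each internal state $\omega$ either takes a direct progressive step into $\cs_{n+1}$, a direct regressive step into $\cs_{n-1}$, or moves to another state within $\cs_n$, all with $q$-dependent probabilities. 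Before writing any equations I would first verify that $\tau < \infty$ almost surely for every $q \in (0,1)$ and every state reachable from $(n,1,0,1)$ or $(n,1,0,0)$; this holds as soon as from each such internal state there is a positive-probability path to a level change, which can be read directly from the transition graph, and it guarantees both that $h_q$ is a genuine probability and that the harmonic equations below are uniquely solvable.

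Next I would carry out a first-step analysis. Writing $u_q(\omega)$ for the probability of a direct progressive step from $\omega$ and $p_q(\omega,\omega')$ for the probability of an internal move $\omega \to \omega'$ (regressive steps contributing $0$), the function $h_q$ solves the linear system
\al{ h_q(\omega) = u_q(\omega) + \sum_{\omega'} p_q(\omega,\omega')\, h_q(\omega'). }
Since only the states visited before the first level change are relevant, the effective system is small, and I would solve it explicitly to obtain $h_q((1,0,1))$ and $h_q((1,0,0))$ as rational functions of $q$.

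I would then analyse these rational functions as $q \to 0$. Because each $h_q$ is a bounded rational function of $q$, its limit exists; it remains to evaluate it and to check that $h_0((1,0,1))$ lies in the closed interval $[\tfrac12,\tfrac47]$ while $h_0((1,0,0)) \geq \tfrac23$. Given $\varepsilon > 0$, continuity of $q \mapsto h_q$ at $0$ then supplies a threshold $q_0$ such that for all $q < q_0$ both probabilities are within $\varepsilon$ of their limits, which is exactly the asserted two-sided bound in the first line and the one-sided bound in the second.

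The main obstacle I anticipate is the degeneracy at $q=0$. As noted after the main theorem, the dynamics develop traps at $q=0$, and in the reduced chain this manifests as a possibly singular limiting system: the closed-form expressions for $h_q((1,0,1))$ and $h_q((1,0,0))$ may present the indeterminate form $0/0$ at $q=0$, with numerator and denominator vanishing to the same order in $q$. Resolving this — factoring out the common power of $q$, or equivalently verifying that from the relevant states a level change occurs before any $q=0$ trap can form, so that the limit is a bona fide probability in $[\tfrac12,\tfrac47]$ respectively $[\tfrac23,1]$ — is where the real care is needed. By contrast, transcribing the $q$-dependent weights from Figure~\ref{fig:ychain} and solving the finite linear system is routine algebra.
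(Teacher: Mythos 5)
Your plan is sound and would prove the lemma; it is a close cousin of the paper's argument but not identical. The paper also works with the level-quotiented chain and the first-passage probabilities $\theta_1,\theta_2,\theta_3$ from the three relevant states, but instead of solving the exact harmonic system it writes down a system of \emph{inequalities} obtained by counting selected families of paths: the only exact first-step relation used is $\theta_1=\tfrac12(1+\theta_2)$, while $\theta_2$ and $\theta_3$ are only bounded from below, with the $2$-cycles of weight $a=\tfrac{1-q}{2(3-q)}$ and $b=\tfrac{1-q}{2(2-q)}$ summed as geometric series. Solving those inequalities yields explicit rational lower bounds, and the upper bound $\theta_3<\tfrac47+\varepsilon$ is obtained by a second, independent count of paths ending with a regressive step, which lower-bounds the complementary probability $1-\theta_3$. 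Your exact-solution route requires transcribing the full transition structure from Figure~\ref{fig:ychain} and inverting an (at most) $8\times 8$ linear system, but in exchange delivers exact values and both one- and two-sided bounds in one stroke; the paper's route needs less of the graph and less algebra, at the cost of having to argue the upper bound separately. One remark on your anticipated obstacle: the $0/0$ degeneracy at $q=0$ does not materialize for the auxiliary chain $Y$. The traps mentioned after the main theorem live in the original particle system, not in $Y$; for $Y$ the relevant cycle weights at $q=0$ are $a=\tfrac16$ and $b=\tfrac14$, so $a+b<1$, all geometric series converge, and the denominators of the resulting rational functions (e.g.\ $\theta_3\geq\tfrac{3-4q+q^2}{6+2q}$ and $1-\theta_3\geq\tfrac{6-5q+q^2}{14-6q}$) stay bounded away from zero near $q=0$. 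Their limits $\tfrac12$ and $\tfrac37$ are precisely the source of the constants $\tfrac12$ and $\tfrac47$ in the statement, and the monotone approach of the lower bound to $\tfrac12$ from below is what makes the strict inequality $\tfrac12-\varepsilon<\theta_3$ (rather than $\tfrac12\leq\theta_3$) the correct formulation.
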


\begin{proof}
The proof proceeds by counting paths in the transition graph. We define
		\al{ \theta_1 &:= \p[Y_\tau \in \cs_{n+1} \mid Y_0 = (n,1,1,0)], \\
				\theta_2 &:= \p[Y_\tau \in \cs_{n+1} \mid Y_0 = (n,1,0,0)], \\
				\theta_3 &:= \p[Y_\tau \in \cs_{n+1} \mid Y_0 = (n,1,0,1)].}
We also set $a:= \tfrac{1-q}{2(3-q)}$ to be the \emph{weight} of the 2-cycle between states $(n,1,0,1)$ and $(n,0,0,1)$. The weight of a cycle is the probability that the Markov chain transitions along this cycle in the transition graph. As a path may use this cycle arbitrarily often, we have
		\al{ \theta_1 &= \frac{1}{2} \left(1+\theta_2\right), \\
					\theta_2 &\geq \frac{1-q}{2-q} \theta_1 + \frac{1-q}{2-q} \theta_3, \\
					\theta_3 &\geq \left(\frac{1}{2} \theta_2 + \frac{1-q}{2} a \right) \sum_{k\geq 0} a^k = \frac{1}{1-a}\left(\frac{1}{2} \theta_2 + \frac{1-q}{2} a \right),}
using the strong Markov property. This leads to the explicit lower bounds
		\al{ \theta_1 & \geq \frac{15-9q+3q^2-q^3}{18-2q^2}, \\
					\theta_2 &\geq \frac{6-9q+4q^2-q^3}{9-q^2}, \\
					\theta_3 &\geq \frac{3-4q+q^2}{6+2q}.}
For small $q$, all of these values are strictly larger than $\tfrac{1}{2}$, except for $\theta_3$, where we have $\theta_3 \nearrow \tfrac{1}{2}$ as $q \to 0$. Finally set $b:= \tfrac{1-q}{2(2-q)}$ to be the weight of a 2-cycle between states $(n,1,0,1)$ and $(n,1,0,0)$ and observe that, by counting paths ending in $\cs_{n-1}$, we have
		\al{1-\theta_3 \geq \left(\frac{1}{2(3-q)}+\frac{1-q}{4(3-q)}\right) \left(\sum_{n\geq 0} \sum_{k=0}^n \binom{n}{k} a^k b^{n-k} \right) = \frac{6-5q+q^2}{14-6q}.}
In the first parenthesis, the first term comes from paths ending in $(n-1, 0,0,0)$ and $(n-1,1,0,0)$, whereas the second terms comes from paths ending in $(n-1,0,0,1)$ as well as $(n-1,1,0,1)$. The lemma follows for $q$ sufficiently small.

\end{proof}


\begin{lemma}\label{lem:smallqdrift}
There exists $0<q_0<1$ such that for all $0<q<q_0$, we have \al{\E\left[Y^1_\tau - Y^1_0\right]>0.}
\end{lemma}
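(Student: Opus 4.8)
I would first observe that $\tau$ records the \emph{first} change of level, and since transitions connect only states in $\cs_n$ and $\cs_{n\pm1}$, the level can move by at most one per step. Hence $Y^1_\tau - Y^1_0 \in \{+1,-1\}$ deterministically, and $\tau<\infty$ almost surely because every internal cycle in Figure~\ref{fig:ychain} has weight strictly below $1$, so the chain leaves its level with probability one. Consequently
\al{\E\left[Y^1_\tau - Y^1_0\right] = \p\left[Y_\tau \in \cs_{n+1}\right] - \p\left[Y_\tau \in \cs_{n-1}\right] = 2\,\p\left[Y_\tau \in \cs_{n+1}\right] - 1,}
so the lemma reduces to showing that the probability of a progressive excursion exceeds $\tfrac12$.

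Here the starting state matters, and this is where Lemma~\ref{lem:smallqonestep} enters. For a level-excursion begun in $(n,1,0,0)$ the progress probability is $\theta_2 > \tfrac23 - \varepsilon$, and for one begun in $(n,1,1,0)$ it is $\theta_1 = \tfrac12(1+\theta_2)$; both exceed $\tfrac12$ once $q$ is small. Choosing $q_0$ so small that the explicit lower bounds for $\theta_1$ and $\theta_2$ from Lemma~\ref{lem:smallqonestep} are simultaneously larger than $\tfrac12$ then yields $\p[Y_\tau \in \cs_{n+1}] > \tfrac12$, hence strictly positive drift, for every excursion started in one of these two states.

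The subtlety, and the single point that needs care, is the borderline state $(n,1,0,1)$: its progress probability $\theta_3$ satisfies $\theta_3 \nearrow \tfrac12$ from \emph{below}, so an excursion launched there has negative drift and the reduction above would give $\E[Y^1_\tau - Y^1_0] = 2\theta_3 - 1 < 0$. The plan is therefore to verify directly from the transition graph of Figure~\ref{fig:ychain} that $(n,1,0,1)$ is never an entry point of a level: every edge leaving $\cs_m$ toward $\cs_{m\pm1}$ terminates in a state of type $(\cdot,1,1,0)$ or $(\cdot,1,0,0)$, while $(\cdot,1,0,1)$ is reached only by transitions internal to a fixed level. Granting this bookkeeping, each time the level changes the chain re-enters in a state with progress probability $\theta_1$ or $\theta_2$, so the conclusion of the first paragraph applies uniformly and $\E[Y^1_\tau - Y^1_0] > 0$. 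I expect this entry-state verification to be the main obstacle, precisely because the estimate fails from $(n,1,0,1)$ and the positive drift hinges on the chain never beginning a fresh excursion in that state.
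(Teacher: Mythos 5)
Your reduction of the lemma to showing $\p[Y_\tau \in \cs_{n+1}] > \tfrac12$ is fine as far as it goes, but the structural claim you rest it on is false, and it fails at exactly the point you flag as the crux. You assert that every edge leaving a level terminates in a state of type $(\cdot,1,1,0)$ or $(\cdot,1,0,0)$, so that the chain never begins a fresh excursion in $(\cdot,1,0,1)$. The opposite is true: every progressive step lands the chain in precisely the state $(n+1,1,0,1)$ (this is the content of Observation~\ref{obs:stepstate}, and it is visible in Figure~\ref{fig:ychain}, where the only edges into $\cs_{n+1}$ enter at $(n+1,1,0,1)$). Since $\theta_3 = \p[Y_\tau\in\cs_{n+1}\mid Y_0=(n,1,0,1)]$ approaches $\tfrac12$ from \emph{below}, the excursion immediately following any progressive step has strictly negative one-step drift, and your argument cannot close. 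This is not a bookkeeping detail that can be repaired inside the one-excursion framework; it is the reason a single-level-change drift estimate genuinely fails here.

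The paper's proof circumvents this by passing to \emph{two} consecutive level changes: with $\tau_2$ the second level change, it shows $\p[Y_{\tau_2}\in\cs_{n+2}\mid Y_0\in\mathcal G_n] > \p[Y_{\tau_2}\in\cs_{n-2}\mid Y_0\in\mathcal G_n]$ for $\mathcal G_n=\{(n,1,0,1),(n,1,0,0)\}$, which suffices because after two level changes the level has moved by $\pm 2$ or by $0$. The mechanism is an asymmetry in entry states: progress always enters at $(n+1,1,0,1)$, where the next progress probability is only about $\tfrac12$ (and at most $\tfrac47+\varepsilon$), whereas regress enters, with probability $1-q$, at states from which the next progress probability is at least $\theta_2>\tfrac23-\varepsilon$. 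Double progress therefore has probability at least $(\tfrac12-\varepsilon)^2$, while double regress is bounded above by roughly $2q+(\tfrac12+\varepsilon)^2+\kappa(\theta_3-\theta_2)$, where $\kappa\geq\tfrac27-\varepsilon$ is the weight of regressive paths through $(n,0,0,1)$ and $\theta_3-\theta_2$ is bounded away from zero from above by a negative constant (since $\theta_3<\tfrac47+\varepsilon$ while $\theta_2>\tfrac23-\varepsilon$); this negative correction is what tips the balance. To salvage your write-up you would need to replace the false entry-state verification with this two-step comparison, or with some equivalent averaging over the excursion that follows a progressive step.
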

\begin{proof}
We start by defining \al{\tau_2 := \min\{i > \tau: \exists n \in \mathbb Z: Y_\tau \in \cs_n, Y_i \in \cs_{n\pm 1} \}}
to be the first level change after $\tau$ and actually prove $\E\left[Y^1_{\tau_2} - Y^1_0\right]>0$. Noting that after two level changes, $Y^1$ will either have increased or decreased by 2 or not changed at all, the lemma follows from proving
		\begin{align} \label{eq:progdrift}
		\p\left[Y_{\tau_2} \in \cs_{n+2} \mid Y_0 \in \cs_n\right] > \p\left[Y_{\tau_2} \in\cs_{n-2} \mid Y_0 \in \cs_n\right].
		\end{align}
for any integer $n$. To this end, we make the following observation, which is an immediate consequence of the definition of the Markov chain dynamics.

\begin{observation}\label{obs:stepstate}
Let $Y_0 \in \cs_n$. Then from $Y_\tau \in \cs_{n+1}$, it follows that $Y_\tau = (n+1,1,0,1)$. On the other hand, if $Y_\tau \in \cs_{n-1}$, then $Y_\tau$, with probability $q$, is one of the two states $\{(n-1,0,0,1), (n-1,0,0,0)\}$ and, with probability $1-q$, is one of the two states $\{(n-1,1,0,1), (n-1,1,0,0)\}$.
\end{observation}

We can thus restrict ourselves to proving \eqref{eq:progdrift} for $Y_0$ being one of the two `\emph{good}' $n$-states $\mathcal G_n :=\{(n,1,0,1), (n,1,0,0)\}$, as we are allowed to choose $q_0$ sufficiently small. Combining Observation~\ref{obs:stepstate} with Lemma~\ref{lem:smallqonestep}, we have
		\al{\tilde \alpha := \p\left[Y_{\tau_2} \in \mathcal S_{n+2} \mid Y_0 \in \mathcal G_n \right] \geq \min_{\omega \in \mathcal G_n} \left(\p\left[Y_\tau \in \mathcal G_{n+1} \mid Y_0 = \omega \right] \right)^2 > \left(\frac{1}{2}-\varepsilon\right)^2}
for some $\varepsilon>0$ and $q$ appropriately small. Recalling that $a:= \tfrac{1-q}{2(3-q)}$ was the weight of a 2-cycle between states $(n,1,0,1)$ and $(n,0,0,1)$ and $b:= \tfrac{1-q}{2(2-q)}$ the value of a 2-cycle between states $(n,1,0,1)$ and $(n,1,0,0)$, and setting $\omega = (n,1,0,1)$,  $\omega' = (n,0,0,1)$ as well as $\omega'' = (n-1,1,0,0)$, we have
		\al{\kappa &:= \p\left[Y_\tau \in \mathcal G_{n-1}, Y_{\tau-1} = \omega' \mid Y_0 = \omega \right] \\
				& \geq \frac{1-q}{2(3-q)} \left(\sum_{m \geq 0} \sum_{k=0}^m \binom{m}{k} a^kb^{m-k} \right) = \frac{1-q}{2(3-q)} \cdot \frac{1}{1-a-b} \\
				& = \frac{2-3q+q^2}{7-3q},}
with the bound obtained simply by counting paths from $\omega$ to $\omega''$ which pass through $\omega'$ in their second to last step.  With $\varepsilon$ small enough ($\varepsilon<1/100$ say), we are now able to bound $\alpha:=\p\left[Y_{\tau_2} \in \cs_{n-2} \mid Y_0 = \omega \right]$, the probability of double regress from $\omega$, as follows:
		\al{ \alpha &  = \p\left[\{Y_{\tau_2} \in \cs_{n-2} \} \cap \{Y_\tau \in \mathcal \cs_{n-1}\} \mid Y_0 = \omega \right] \\
			& \leq q + (1-q) \cdot \p\left[\{Y_{\tau_2} \in \cs_{n-2} \} \cap \{Y_\tau \in \mathcal G_{n-1}\} \cap \{Y_{\tau-1} = \omega' \} \mid Y_0 = \omega \right] \\
			& \quad + q + (1-q) \cdot \p\left[\{Y_{\tau_2} \in \cs_{n-2} \} \cap \{Y_\tau \in \mathcal G_{n-1}\} \cap \{Y_{\tau-1} \neq \omega' \} \mid Y_0 = \omega \right] .}
Rearranging by defining $B = \{Y_\tau \in \mathcal G_{n-1}\} \cap \{Y_{\tau-1} = \omega' \}$ and $B' = \{Y_{\tau} \in \mathcal G_{n-1}\} \cap \{Y_{\tau-1} \neq \omega' \}$ and observing that the event $B$ implies that $Y_\tau=\omega''$, we continue to find that
		\al{ \alpha & \leq  2q + (1-q) \sum_{A \in \{B, B'\}} \p \left[Y_{\tau_2} \in \cs_{n-2} \mid A \right] \cdot \p \left[A \mid Y_0 = \omega \right] \\
			& \leq 2q + (1-q) \cdot \p\left[Y_{\tau_2} \in \cs_{n-2} \mid Y_\tau = \omega'' \right] \cdot \kappa \\
			& \quad + (1-q) \cdot \p\left[Y_{\tau_2} \in \cs_{n-2} \mid Y_\tau = \omega \right] \cdot \left( \p\left[Y_\tau \in \mathcal G_{n-1} \mid Y_0 = \omega \right] - \kappa \right) \\
			& \leq 2q + (1-q) \left( \kappa (1- \theta_2) + (1-q)(1-\theta_3)(1- \theta_3 - \kappa) \right) \\
			& < 2q + (1-\theta_3)^2 + \kappa (\theta_3 - \theta_2) \\
			& < 2q + \left(\frac{1}{2} + \varepsilon\right)^2 + \left(\frac{2}{7} - \varepsilon\right) \left( \frac{4}{7}- \frac{2}{3}\right) \\
			& < 2q + \tilde \alpha - \frac{4}{21} \left( \frac{1}{7} -  11 \varepsilon \right) < \tilde \alpha}
for our chosen $\varepsilon$ and $q$ sufficiently small, where $\theta_i$ have been defined in the proof of Lemma \ref{lem:smallqonestep}. Note that again we make heavy use of the strong Markov property as well as the bounds from Lemma~\ref{lem:smallqonestep}.
\end{proof}

\subsection{The coupling}
\label{coupling}

We are now ready to return to our process. Recall that $Y$ should be thought of as a model of the right neighborhood of the rightmost infected site in the original process. Intuitively speaking, we want to find a coupling such that $Y^0 \leq \I(t)$ at any given time---this, however, is ill-defined. To make it more precise, let us first formally build towards the discrete version of the segment of the process that is of interest (i.e., the right neighborhood of the rightmost infected site). For $(\eta_t)_{t \geq 0}$ a realization of the process in $\Omega^*$, we define the map $\Phi:\Omega^* \to \mathbb Z \times \{0,1\}^4$ as
		\al{\Phi(\eta_t) = \left(\I(t), \left(\eta_t(\I(t)+i)\right)_{i=1}^4\right).}
Hence, $(\Phi_t)_{t \geq 0} = (\Phi(\eta_t))_{t \geq 0}$ is the segment of the process we are interested in. Let $S(x)=(s_i(x))_{i \in \mathbb N}$ be the sequence of clock rings for site $x$. That is, $s_1 \sim \textrm{Exp}(1)$ and $\left(s_{i+1}(x)-s_i(x)\right) \sim \textrm{Exp}(1)$ for all $i \in \mathbb N$. This allows us to define $(R_i)_{i \in \mathbb N_0}$, the sequence of times of clock rings of the process restricted to $\left(\eta_t(\I(t)+i)\right)_{i=0}^4$, as $R_0 =0$ and
		\al{R_{i+1} = \inf\left\{s_j(x): s_j(x) > R_i, x \in \{\I(R_i) + l: 0 \leq l \leq 4 \}, j\in \mathbb N \right\}} 
for all $i \geq 0$. We are interested in the process $(X_i)_{i \in \mathbb N_0}$, a subset of $(\overline X)_{i \in\mathbb N_0}$, where $ \overline X_0 = X_0=\Phi(\eta_{R_0})$ and
		\al{\overline X_i &= \Phi(\eta_{R_i}), \\ X_i &= \Phi(\eta_{R_l}), \quad l= \inf \{ k \geq i:  \Phi(\eta_{R_k}) \neq \Phi(\eta_{R_{i-1}}) \}}
for all $i \geq 1$. In words, $(\overline X)_i$ is the embedded discrete time chain of $(\Phi)_t$, and $X$ is the chain obtained from $\overline X$ by removing all of the self-loops. Process $X$ is the one which, in certain time windows, behaves very much like $Y$. To make this precise, we define $(R'_i)_{i \in\mathbb N_0}$ with $R'_0=0$ and
		\al{R'_{i+1}= \inf\left\{\left\{t> R'_i: \I(R'_i)\neq \I(t)\right\} \cup \{ s_j(\I(R'_i) + 4):  s_j(\I(R'_i) + 4)> R'_i, j\in \mathbb N \} \right\}}
to be the times when either the position of the rightmost infected particle changes or the clock at the site determining the boundary condition, $\I(\,\cdot\,) + 4$, rings. 
We call $W_i := [R'_i, R'_{i+1})$ the \emph{stable windows} for all $i \geq 0$. A stable window closes whenever the boundary site rings or the infected site moves. We can now proceed to describe the behavior of $X$ in a stable window. As we did for $Y$, we can partition the state space of $X$ into its levels and, as no confusion arises this way, call them $\cs_n$ as well. The dynamics within $W_i$ depend only on $\Phi(\eta_{R'_i})$, namely the initial state also encoding the boundary conditions, and are therefore Markovian. Given this initial state for $W_0$, we can depict the transition graph in a very similar way as the one for $Y$, as the subgraphs induced by the levels are again isomorphic, the states in neighboring levels are terminal as they `close' the window $W_0$. Conditional on the boundary conditions, the two transition graphs are shown in Figures~\ref{fig:xchain1} and \ref{fig:xchain0}.

\begin{figure}
\centering
  \includegraphics[width=.8\linewidth]{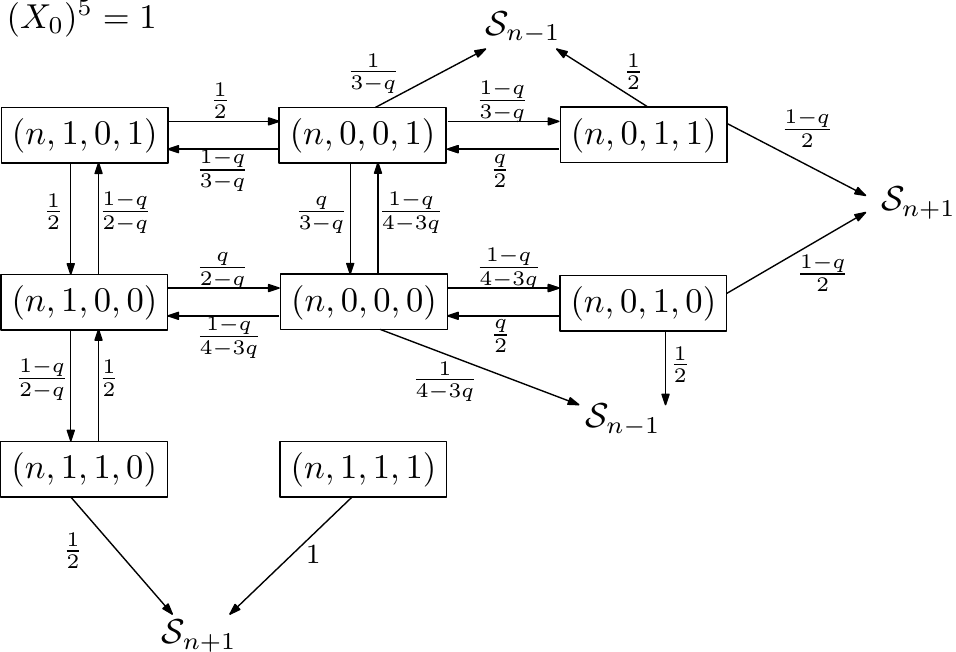} \ \\
  \caption{Transition subgraph of $X$ in window $W_0$ induced by $\cs_n$ for passive initial boundary conditions. Note that entering $\cs_{n\pm1}$ closes $W_0$.}
  \label{fig:xchain1}
\end{figure}

\begin{figure}
\centering
  \includegraphics[width=.8\linewidth]{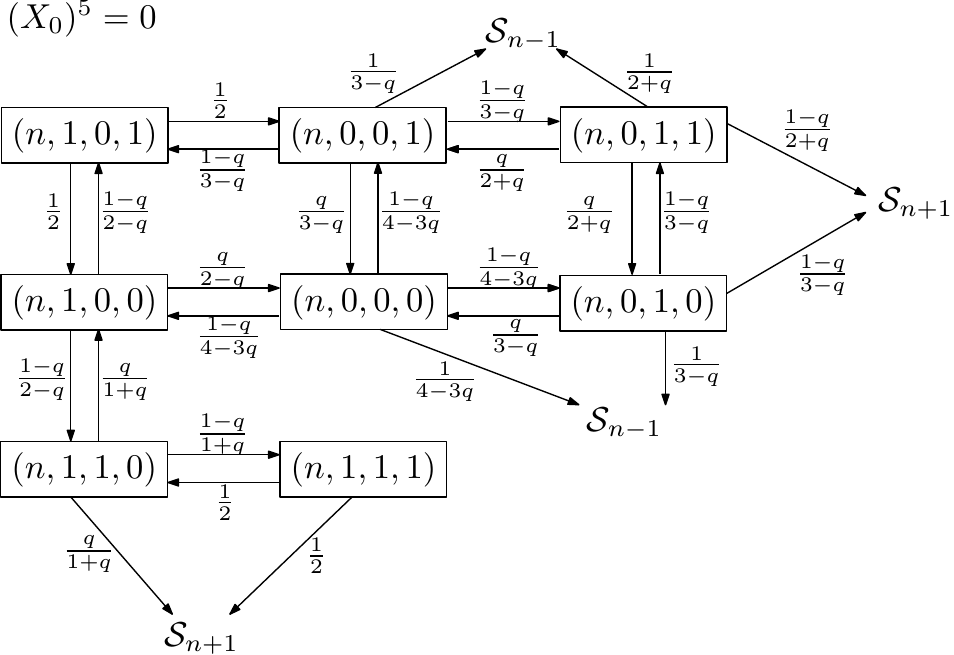} \ \\
  \caption{Transition subgraph of $X$ in window $W_0$ induced by $\cs_n$ for healthy initial boundary conditions.}
  \label{fig:xchain0}
\end{figure}

\begin{lemma} \label{lem:coupling}
We have that for any $i\geq1$
			\al{\p \left[X_{R'_1} \in \cs_{n+1} \mid X_{R'_0} \in \cs_n, X_{R'_1} \notin \cs_n  \right] \geq \p \left[Y_\tau \in \cs_{n+1} \mid Y_0 \in \cs_n  \right]}
for any integer $n$. In words, conditioned on $W_0$ closing due to a level change, the probability of progress in $X$ is bounded from below by the probability of progress in $Y$.
\end{lemma}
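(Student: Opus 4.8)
The plan is to prove Lemma~\ref{lem:coupling} by constructing an explicit coupling between the window process $X$ restricted to $W_0$ and the auxiliary chain $Y$, and then arguing that under this coupling, whenever the window closes due to a level change, progress in $X$ occurs at least whenever progress in $Y$ does. Concretely, I would first compare the two transition graphs: the graphs for $X$ (Figures~\ref{fig:xchain1} and~\ref{fig:xchain0}) and for $Y$ (Figure~\ref{fig:ychain}) have isomorphic level-induced subgraphs, but they differ at the boundary site $\I(\cdot)+4$, which in $X$ carries a genuine state (either passive or healthy) whereas in $Y$ it is abstracted away. The key structural point to establish is that the dynamics of $X$ inside a stable window coincide with those of $Y$ except possibly for transitions triggered by the boundary site, and that these boundary-triggered differences can only help progress or be neutral.

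The core of the argument is a state-by-state comparison of transition probabilities. First I would identify, for each state in $\cs_n$, the outgoing transitions in the $X$-graph versus the $Y$-graph, and check that the \emph{conditional} probability of moving up to $\cs_{n+1}$ rather than down to $\cs_{n-1}$, given that a level change occurs, is at least as large for $X$ as for $Y$. The natural way to organize this is to note that the only discrepancy between the two chains lies in how the boundary condition at distance $4$ influences the $0\to 2$ infection of the site at distance $3$: in $Y$ this boundary effect is replaced by the conservative (pessimistic) assumption built into the transition weights, so the $X$-chain should dominate. I would make this precise by a path-counting or weight comparison entirely analogous to the one used in the proof of Lemma~\ref{lem:smallqonestep}, showing that every cycle weight and every exit weight favoring progress in $X$ dominates the corresponding quantity in $Y$.

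The cleanest route is a monotone coupling: run both chains on the same source of randomness so that, as long as neither has changed level, the component encoding the local infection profile in $X$ stochastically dominates the corresponding component in $Y$ (this is where the monotonicity encoded in Claim~\ref{claim:monoinf}, that additional infected sites only help survival, enters). Under such a coupling, if $Y$ exits upward to $\cs_{n+1}$, then $X$ must also exit upward, which gives the desired inequality on the conditional progress probabilities. I would handle the two boundary cases (passive versus healthy boundary, Figures~\ref{fig:xchain1} and~\ref{fig:xchain0}) separately, verifying the domination in each and noting that in the healthy-boundary case progress is at least as likely as in the passive case, so the bound against $Y$ holds a fortiori.

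The main obstacle I anticipate is handling the boundary site $\I(\cdot)+4$ correctly: its clock ringing can close the window $W_0$ without a level change, and its state feeds into whether the site at distance $3$ can become infected, which is exactly the transition that $Y$ treats only conservatively. The delicate part is to verify that conditioning on a level change (rather than a boundary-ring closure) does not distort the comparison---that is, that the ratio of upward to total level-change probability is genuinely monotone under the coupling, and not merely the raw upward probability. I expect this to require careful bookkeeping of the cycle structure between $(n,1,0,1)$ and $(n,0,0,1)$ and between $(n,1,0,1)$ and $(n,1,0,0)$, since these $2$-cycles (with weights $a$ and $b$ as in Lemma~\ref{lem:smallqonestep}) are precisely where $X$ and $Y$ can diverge, and one must confirm that the additional infection opportunities available to $X$ only shift mass toward $\cs_{n+1}$.
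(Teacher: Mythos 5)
Your overall strategy---couple the first four coordinates of $X$ with $Y$ inside the stable window, treat the two boundary conditions separately, and verify by path/weight counting that every route to progress in $Y$ is matched by at least as much progress weight in $X$---is exactly the route the paper takes. However, as written the proposal is a plan rather than a proof, and it contains one concrete misidentification that would derail the execution. You locate the divergence between $X$ and $Y$ at the $2$-cycles with weights $a=\tfrac{1-q}{2(3-q)}$ and $b=\tfrac{1-q}{2(2-q)}$ (between $(n,1,0,1)$ and $(n,0,0,1)$, resp.\ $(n,1,0,1)$ and $(n,1,0,0)$). Those cycles are common to both transition graphs and are not where the chains differ. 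The actual discrepancies are: under \emph{passive} boundary conditions, only the state $(n,1,1,1)$ behaves differently, and in a way that can only slow $Y$'s progress relative to $X$'s; under \emph{healthy} boundary conditions, $X$ has additional edges between $(n,0,1,1)$ and $(n,0,1,0)$ (and extra routes through $(n,1,1,1)$ for $q<1/2$). The decisive computation, which your proposal never performs, is that the extra paths generated by the $2$-cycle of weight $c=\tfrac{q(1-q)}{(2+q)(3-q)}$ between $(n,0,1,1)$ and $(n,0,1,0)$ satisfy
\[\Bigl( \tfrac{1-q}{2+q} + c \Bigr)\sum_{k\geq 0} c^k = \tfrac{1-q}{2},\]
i.e.\ they recover exactly the progress weight that $Y$ assigns to the direct exit from $(n,0,1,1)$ to $\cs_{n+1}$; without this identity the domination under healthy boundary conditions is not established.

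Two further cautions. First, your appeal to Claim~\ref{claim:monoinf} does not do the work you want: that claim compares two copies of the original particle system differing at one site, whereas here you must compare $X$ (a window of the true process) with the hand-built chain $Y$, and the domination has to be read off the two transition graphs directly---there is no generic monotonicity to invoke. Second, you correctly flag that one must compare probabilities \emph{conditioned} on the window closing via a level change rather than via a boundary clock ring; this is a real subtlety, but it is resolved by the observation (which you should state, not merely anticipate) that within $W_0$ the fifth coordinate is frozen by construction, so the first four coordinates evolve as a Markov chain with fixed boundary condition and the comparison reduces to a weight comparison of exit paths in two explicit finite transition graphs.
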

\begin{proof}
Knowing that $X$ does not change its fifth coordinate (its boundary conditions as a clock ring would close the window) within $W_0$ allows us to couple the first four coordinates of $X$ with $Y$ while considering both cases of $X$'s boundary conditions and aim for the desired domination. It is not hard to see that $X^1$ dominates $Y^1$ under passive boundary conditions: Note that the transition probabilities as well as the states one ends up in after regress are the same except for state $(n,1,1,1)$, which only slows the progress of $Y$. 

Let us justify why the same holds for healthy boundary conditions by showing that for any path leading to progress in $Y$, we can find a union of heavier paths in $X$ (with those unions being disjoint). It is clear that for all $q <1/2$ and any progressive path in $Y$ last visiting $(n,1,1,0)$, we can find a heavier one in $X$ using the edges between states $(n,1,1,0)$ and $(n,1,1,1)$.

Next, observe that for $X$ with healthy boundary conditions, we have additional edges between states $(n,0,1,1)$ and $(n,0,1,0)$. So any path in $Y$ leading to progress and last visiting either one of these two states may make use of these edges and then progress in $X$. Defining $c:= \tfrac{q(1-q)}{(2+q)(3-q)}$ to be the weight of a 2-cycle between these states we have that, starting from state $(n,0,1,1)$,
		\al{\left( \frac{1-q}{2+q} + c \right)\sum_{k\geq 0} c^k = \frac{1-q}{2},}
so these extra paths add up precisely to the weight of the edges from $(n,0,1,1)$ to $\cs_{n+1}$. An analogous computation gives the same result when starting from state $(n,0,1,0)$. 
\end{proof}
\begin{proof}[Proof of Proposition~\ref{thm:smallq}]
Lemma~\ref{lem:coupling} is valid regardless of the boundary condition, so we can glue together stable windows until the event $X_{R'_1} \notin \cs_n$ is satisfied---that is, until a window ends with a level change. In this case, there are two possibilities, namely $X_{R'_1} \in \cs_{n + 1}$ (progress) or $X_{R'_1} \in \cs_{n-1}$ (regress).

In the canonical coupling of the first four coordinates of $X$ and $Y$ within $W_0$, we obtain that after regress, $X$ and $Y$ end up in the same state (w.r.t.~to the first four coordinates of $X$), whereas after progress, $X$ is in one of the states $(n+1,1,0,1,z), (n+1,1,0,0,z), (n+1,1,1,0,z)$ or $(n+1,1,1,1,z)$ (with $z \in \{0,1\}$ determining the new boundary condition), while $Y$ will find itself in $(n+1,1,0,1)$. So in any case, $Y$ is in a state from which progress in less likely.

In summary, Lemma~\ref{lem:smallqdrift} shows that $Y^0$ dominates a random walk on $\mathbb Z$ with positive drift and so $Y^0$ has a positive drift. Due to the coupling obtained from Lemma~\ref{lem:coupling}, this drift carries over to $X$. Hence, the law or large number for a random walk with drift yields the claimed statement. Finally, Theorem~\ref{mainthm}~(i) follows from Proposition~\ref{thm:smallq} via Claim~\ref{claim:monoinf}.
\end{proof}

\section{Extinction for large $q$} \label{sec:largeq}

We import some notation from Section~\ref{sec:smallq}. Namely, let $(X_i)_{i \in \mathbb N} \subset \mathbb Z \times \{0,1\}^4$ be the discrete time process describing the rightmost infected site and its neighbors and let $\cs_n$ denote all $n$-levels of its state space. Similar to how $\tau$ and $\tau_2$ were defined for the Markov chain $Y$ in that section, we define $\tau_i$ for $i \geq 1$ as
		\al{\tau_{i+1} = \inf\{j\geq \tau_i: \exists n\in \mathbb Z: X_{\tau_i} \in \cs_n, X_j \in \cs_{n \pm 1}  \},}
where we set $\tau_0=0$, to be the sequence of level changes of $X$. We abbreviate $\tau=\tau_1$ when it is convenient. We next define two stopping times describing the length of consecutive progressive and regressive steps, respectively. That is, we set
		\al{\taur &:= \sup \{i \in \mathbb N_0: \exists n \in \mathbb Z: X_0 \in \cs_n, X_{\tau_i} \in \cs_{n+i} \}, \\
				\taul &:= \sup \{i \in \mathbb N_0: \exists n \in \mathbb Z: X_0 \in \cs_n, X_{\tau_i} \in \cs_{n-i}\}}
and call $\taur$ a progressive and $\taul$ a regressive interval, respectively. It is clear that we can partition $X$ into alternating progressive and regressive intervals. Our aim is to prove that the length of a progressive interval is, in expectation, less than the length of a regressive one. Note that if $\tau$ is a regressive step, then $X_\tau \in \mathcal G_n$ for some integer $n$, where
		\al{ \mathcal G_n = \{(n,z_2, 0, z_4, z_5): z_i \in \{0,1\} \textrm{ for } i =2,4,5\} .} 
Similarly, if $\tau$ is a progressive step, then $X_\tau \in \mathcal B_n$ for some $n$, with
		\al{ \mathcal B_n = \{(n,1, z_3, z_4, z_5): z_i \in \{0,1\} \textrm{ for } i =3,4,5\} .} 
The following lemma is the main step in the proof of Theorem~\ref{mainthm} (ii).

\begin{lemma} \label{lem:largeqdrift}
In the above notation, we have that
		\al{\E[\taur \mid X_0 \in \mathcal G_n] < \E[\taul \mid X_0 \in \mathcal B_n]}
for $n \in \mathbb Z$ und $q$ sufficiently large.
\end{lemma}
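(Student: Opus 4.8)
The plan is to exploit the renewal structure already visible in the statement. By the remark preceding the lemma, every progressive step lands the chain in $\mathcal B_n$ and every regressive step lands it in $\mathcal G_n$. Hence a progressive interval is a maximal run of consecutive progressive level-changes whose first step issues from a good state and all of whose subsequent steps issue from bad states; symmetrically, a regressive interval issues first from a bad state and thereafter from good states. I would therefore decompose $\taur$ and $\taul$ along their successive level-changes $\tau_i$ and run a first-step (strong Markov) analysis.

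Concretely, writing $\pi(s) = \p[X_{\tau}\in\cs_{n+1}\mid X_0=s]$ for the one-step progress probability out of a state $s\in\cs_n$, each such probability is computed by counting paths in the transition graphs of Figures~\ref{fig:xchain1} and \ref{fig:xchain0}, exactly as $\theta_1,\theta_2,\theta_3$ were obtained in the proof of Lemma~\ref{lem:smallqonestep} by summing geometric series over the relevant $2$-cycles. Feeding these $\pi(s)$ into the first-step recursions for $\E[\taur\mid X_0=s]$ and $\E[\taul\mid X_0=s]$ yields a finite linear system over the finitely many states of $\mathcal G_n$ and $\mathcal B_n$, whose solution expresses both expectations as explicit rational functions of $q$. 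Averaging over the entry distributions with which progressive and regressive intervals actually begin then reduces the lemma to comparing two explicit functions of $q$.

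The decisive input for the large-$q$ regime is a structural observation about the dynamics: the front at $\I$ can only advance if its right neighbor $\I+1$ becomes infected, which requires $\I+2$ to be non-healthy, i.e.\ passive; and passive sites are produced only through the recovery channel, at the small probability $1-q$. Consequently, out of any state whose right-neighborhood is healthy, progress has probability $O(1-q)$, whereas the front recovers (regress) with probability bounded away from $0$ as $q\to1$. This forces $\E[\taur\mid X_0\in\mathcal G_n]\to0$ while $\E[\taul\mid X_0\in\mathcal B_n]$ stays bounded below, which is amply enough for the strict inequality once $q$ is close enough to $1$.

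The main obstacle is precisely the configurations I will call dangerous --- those carrying one or more consecutive passive sites immediately to the right of the front --- since from these progress is not rare, and a crude uniform bound over all of $\mathcal B_n$ (which does contain such states) would fail. The point to establish is that these states are entered only with probability $O(1-q)$ and that a progressive run cannot be sustained through them: each further advance consumes a passive site and, to continue, needs the next site to be passive as well, an independently $O(1-q)$-rare event, so the run terminates after $1+O(1-q)$ steps in expectation. Making this separation of safe versus dangerous right-neighborhoods precise, and controlling the entry distribution of the intervals accordingly, is the delicate bookkeeping on which the proof rests; the path-counting itself is routine in the manner of Section~\ref{sec:smallq}.
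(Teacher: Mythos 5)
Your overall strategy (regressive intervals get long, progressive intervals stay short, with the all-passive right-neighborhoods singled out as the dangerous case) matches the skeleton of the paper's argument, but the way you dispose of the dangerous states contains a genuine gap. You claim that from a state with passive sites immediately to the right of the front, each further advance ``needs the next site to be passive as well, an independently $O(1-q)$-rare event, so the run terminates after $1+O(1-q)$ steps in expectation.'' This is false. The chain $X$ records only the four sites to the right of $\I$; beyond that window the configuration may contain an arbitrarily long \emph{pre-existing} block of passive sites, and a passive site whose left neighbor is infected and whose right neighbor is not healthy becomes infected with probability $q$ --- close to $1$, not to $0$ --- upon updating. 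Through such a block the front therefore advances with high probability at every step, and the length of a progressive run started from $(n,1,1,1,1)$ is governed by the length of the passive block ahead, not by a sequence of independent $O(1-q)$ events. For the same reason your ``finite linear system over the finitely many states of $\mathcal G_n$ and $\mathcal B_n$'' is not well defined: the behaviour out of $(n,1,1,1,1)$ after a progressive step depends on $\eta$ at sites $\I+5,\I+6,\dots$, which the state does not encode, so the first-step recursions do not close on this finite state set.

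What is missing is exactly the content of Lemma~\ref{lem:constpstrips}: for $q>1/2$ the expected distance $\E[\xi^{x}(\eta_t)]$ to the nearest healthy site does not increase, so the passive block ahead of the front has bounded expected length when a progressive interval begins. The paper then bounds $\E[\taur\mid X_0=(n,1,1,1,1)]$ by forcing the infection through the entire block (costing $\E[\xi^{\I(0)}(\eta_0)]$ steps in expectation) and restarting from a state in $\mathcal G_n$, from which the progressive interval terminates quickly. Without some substitute for this control of the environment outside the window, your separation into safe and dangerous states cannot close the argument; the rest of your outline (path counting in Figures~\ref{fig:xchain1} and \ref{fig:xchain0}, the divergence of the regressive interval length as $q\to1$) is consistent with what the paper does.
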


\begin{proof}[Proof of Theorem~\ref{mainthm} (ii)] As observed above, starting at $\tau$, any progressive interval must start from a $\mathcal G$ state, whereas any regressive interval must start from a $\mathcal B$ state. Hence, the conditioning in Lemma~\ref{lem:largeqdrift} is not a restriction and the rightmost infected site is dominated by a $\mathbb Z$-valued random walk with negative drift, which yields the claimed result.

\end{proof}

\begin{comment}
For the proof of this lemma we shall make use the following corollary of Proposition 4.1 given in \cite{fa1f} by Blondel et al.
\begin{corollary}[Blondel et al.~\cite{fa1f}, Proposition 4.1.]\label{distance}
Given the dynamics of 3SCP1, consider the process on the finite set $\sigma_0=(z_1,\ldots,z_{k}) \in \{0,1\}^{k}$, where we define $z_0=z_{k+1}=0$. Then if $q>2/3$ for any $i \in [k]$ we have that
\begin{align*}
	\mathbb E_{\sigma_0}\left[\min_{j\in [k+1]\cup \{0\}:~ \sigma_t(z_j)=0} \{d(z_j,z_i)\}\right]\leq 2^{(k+2)/2}e^{-(3q/2 -1) t}+\frac{q}{3q-2} \quad \forall t\geq 0.
\end{align*}
\end{corollary}
\begin{proof}
	Observe that we do not have any infections, i.e.~sites in state $2$ and hence the dynamics of our 3SCP1 correspond to those of the FA1f model as introduced in \cite{fa1f}. We set $\theta=2$ and use that $2^x> x$ for all $x$. Also observe that as we have healthy boundary sites, for every $z_i$ the initial minimal distance to an healthy site is bounded by $(k+2)/2$.
\end{proof}
\end{comment}

Turning towards the proof of Lemma~\ref{lem:largeqdrift}, a key observation is the fact that, when $q$ is sufficiently large, healthy sites drift towards each other. More precisely, given a connected set of passive sites with healthy boundary conditions, we expect the size of this set to decrease with time. With this in mind we define
		\al{\xi^x(\eta) = \inf \{|y-x|: y \in \mathbb Z, \eta(y) = 0\},}
for some $\eta \in \Omega$ and $x \in \mathbb Z$, i.e.~the distance of $x$ to the next healthy site in $\eta$.

\begin{lemma} \label{lem:constpstrips}
	Let $q > 1/2$. Given any initial distribution $\nu$ taking values in $\{0,1\}^\mathbb Z$. Assume that $\kappa := \E[\xi^x(\nu)] < \infty$ for some site $x \in \mathbb Z$. Then for the process $(\eta_t)$ with $\eta_0 \sim \nu$, we have
	\al{\mathbb E^{\nu}\left[\xi^x(\eta_t)\right] \leq \max\{1, \kappa + t(1-2q)\} \quad \forall t\geq 0.}
\end{lemma}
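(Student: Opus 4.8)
The plan is to first reduce to the FA1f dynamics. Since $\nu$ is supported on $\{0,1\}^{\mathbb Z}$ and the rules create an infected site only next to an already infected one, the process started from $\eta_0 \sim \nu$ never leaves $\{0,1\}^{\mathbb Z}$, and on this set $\mathcal L$ coincides with the FA1f generator described in the introduction. In particular $\bigl(\xi^x(\eta_t)\bigr)_{t \ge 0}$ is a nonnegative integer-valued pure-jump process whose rate of change is bounded (only the boundedly many sites near a closest healthy site can alter it), so Dynkin's formula applies after a truncation justified by $\kappa = \E^\nu[\xi^x(\eta_0)] < \infty$.

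Next I would establish the pointwise drift bound $\mathcal L \xi^x(\eta) \le 1-2q$ on $\{\xi^x \ge 1\}$. Two structural facts drive this. First, a closest healthy site $y$ (with $|y-x| = \xi^x(\eta)$) can flip to passive only if it has a healthy neighbour, and that neighbour then becomes the new closest healthy site; hence any increase of $\xi^x$ is by exactly $1$ and occurs at total rate at most $1-q$. Second, the passive site adjacent to $y$ on the side of $x$ always has $y$ as a healthy neighbour, so it flips to healthy at rate $q$, decreasing $\xi^x$ by $1$; thus the down-rate is at least $q$ whenever $\xi^x \ge 1$. Since all jumps are $\pm 1$, combining the two gives $\mathcal L\xi^x(\eta) \le (1-q)-q = 1-2q$ on $\{\xi^x \ge 1\}$. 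On the boundary set $\{\xi^x = 0\}$ the site $x$ is healthy and the only admissible move is a flip of $x$ to passive, raising $\xi^x$ to exactly $1$ at rate at most $1-q$.

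Feeding this into Dynkin's formula and writing $u(t) = \E^\nu[\xi^x(\eta_t)]$, I obtain the differential inequality
\[
u'(t) \;\le\; (1-2q)\,\p[\xi^x(\eta_t) \ge 1] + (1-q)\,\p[\xi^x(\eta_t) = 0] \;=\; (1-2q) + q\,\p[\xi^x(\eta_t)=0],
\]
and I would compare $u$ with the deterministic profile $v(t) = \max\{1,\kappa+(1-2q)t\}$. On the strictly decreasing branch the bulk drift $1-2q<0$ pushes $u$ down at the required rate, while the $\max$ with $1$ is meant to encode the reflection at $\{\xi^x=0\}$, from which the process always re-enters at height $1$.

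The main obstacle is exactly the boundary term $q\,\p[\xi^x(\eta_t)=0]$. The bound above is \emph{not} by itself sufficient: a law can carry mass simultaneously at $\xi^x=0$ and far out, so $u(t)\ge 1$ does not force $\p[\xi^x(\eta_t)=0]$ to be small, and the bare one-sided drift would only produce a floor of $\tfrac{1-q}{2q-1}$, which exceeds $1$ for $q\in(1/2,2/3)$. To pin the floor at $1$ I would sharpen the second step using the two-sided nature of $\xi^x=\min(L,R)$, the distances to the closest healthy site on each side: when the two closest healthy sites are equidistant from $x$, the upward move is blocked while both adjacent passive sites flip inward, so the down-rate is $2q$ and the drift is $-2q$. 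Quantifying how this extra negative drift in the two-sided configurations compensates the reflection term — equivalently, dominating $\xi^x(\eta_t)$ in distribution by a birth–death chain whose invariant level lies below $1$ — is the crux, and is what upgrades the comparison to the stated bound $u(t)\le\max\{1,\kappa+(1-2q)t\}$.
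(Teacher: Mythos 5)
Your reduction to the FA1f dynamics, the observation that any increase of $\xi^x$ is by exactly one unit and occurs at total rate at most $1-q$ (the closest healthy site can only turn passive if the site just beyond it is healthy, which then becomes the new closest one), and the down-rate bound $\geq q$ are all correct, and they carry essentially the same content as the paper's proof. The difference is in packaging: the paper argues pathwise, replacing the true process by a dominating one in which the site beyond the closest healthy site is always assumed healthy, so that $\xi$ becomes a nearest-neighbour walk taking a $+1$ step with probability $1-q$ and a $-1$ step with probability $q$ at the Poisson$(t)$-many position changes, and then invokes Wald's lemma; you instead go through Dynkin's formula and the differential inequality $u'(t)\le(1-2q)+q\,\mathbb P[\xi^x(\eta_t)=0]$ for $u(t)=\mathbb E^\nu[\xi^x(\eta_t)]$. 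Either route yields the linear decay $\kappa+t(1-2q)$ away from the boundary.

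The genuine gap is the one you name yourself and then leave open: the reflection term $q\,\mathbb P[\xi^x(\eta_t)=0]$. Your differential inequality does not by itself give $u(t)\le\max\{1,\kappa+t(1-2q)\}$, and the proposed remedy (exploiting the two-sided configurations with down-rate $2q$, or dominating by a birth--death chain with invariant mean below $1$) is only announced, not executed; so as written the proposal does not prove the stated bound. Two remarks to calibrate this. First, your point that the one-sided dominating walk, reflected at $0$ with up-rate $1-q$, has stationary mean $(1-q)/(2q-1)>1$ for $q\in(1/2,2/3)$ is a legitimate criticism that applies equally to the paper's own proof, which dismisses the boundary with the clause ``as all of this remains true for $\xi_t\ge1$'' and the $\max\{1,\cdot\}$; neither argument, as written, justifies the constant $1$ in that range of $q$. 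Second, the paper itself notes after the lemma that only a much weaker statement is needed for Lemma~\ref{lem:largeqdrift}, namely that $\mathbb E^\nu[\xi^x(\eta_t)]$ stays bounded; a bound of the form $\max\{\kappa,(1-q)/(2q-1)\}$ does follow from your set-up by stochastic domination with the reflected birth--death chain, so replacing the floor $1$ by $(1-q)/(2q-1)$ would close your argument and still suffice for the application.
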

\begin{proof}
Let $\eta = \eta_0$ be state of the process at time $0$. Due to translation invariance and symmetry, we shall consider site $x=0$ and assume the closest healthy site is located at $\xi_t = \xi^0(\eta_t) \gg 1$ for all times $t$. Since we are only interested in an upper bound, we always assume that $\eta(\xi_t+1) = 0$. In doing this, we obtain a process whose $\xi_t$-value dominates the original one. We thus end up with the following simplification:
\begin{itemize}
	\item If site $\xi_t$ updates, then with probability $1-q$, it becomes passive and $\xi_{t^+} = \xi_t + 1$,
	\item if site $\xi_t-1$ updates, then with probability $q$, it becomes healthy and $\xi_{t^+} = \xi_t - 1$,
\end{itemize}
and those are the only updates changing the position of $\xi_t$. Hence, the expected change of $\eta_t$ after an update is $1-2q<0$. The number of updates in $[0,t]$ of these two sites is $2\text{ Poisson}(t)$-distributed, and with probability $1/2$, an update yields a change of position, so $N_t$, the number of position changes in $[0,t]$, is $\text{Poisson}(t)$-distributed. Hence, as all of this remains true for $\xi_t \geq 1$, the statement follows by Wald's lemma.
\end{proof}
Note that Lemma~\ref{lem:constpstrips} is very much in the spirit of Proposition~4.1 in \cite{fa1f}, even though we need a much weaker statement to prove Lemma~\ref{lem:largeqdrift}, namely that $E^{\nu}\left[\xi^x(\eta_t)\right]$ is not increasing.
\begin{proof}[Proof of Lemma \ref{lem:largeqdrift}]
We begin by considering $\taul$ and noting that, no matter the boundary conditions,
		\al{\p[X_{\tau_2} \in \cs_{n-1} \mid X_{\tau_1} \in \cs_n, X_0 \in \cs_{n+1}] &= \p[X_\tau \in \cs_{n-1} \mid X_0 \in \mathcal G_n] \geq \alpha (q) \\
				 & = \min\left\{ \frac{q}{2-q} \cdot \frac{1}{4-3q}, \frac{1}{3-q} (1+\frac{q}{4-3q})  \right\} \xrightarrow{q \to 1} 1.}
In words, following a regressive step, we witness another regressive step with probability at least $\alpha \to 1$. That is because from $\mathcal G_n$, $X$ ends up in another regressive step within three steps or less, regardless of a change of boundary conditions during that time. As a direct consequence, $\E[\taul \mid X_0 \in \mathcal G_n] \geq (1-\alpha(q))^{-1}$ gets arbitrarily large for $q \to 1$. On the other hand, we have that there exists $\beta<1$ such that
		\al{\p[X_\tau \in \cs_{n+1} \mid X_0 \in \mathcal B_n \backslash\{(n,1,1,1,1) \}] \leq \beta}
for all $q$ not too small ($q > 1/2$ say), and thus
		\al{\E\left[ \taur \mid X_0 \in \mathcal B_n \backslash\{(n,1,1,1,1)\} \right] \leq \beta \left(1+\E\left[ \taur \mid X_0 \in \mathcal B_n \right]\right) \leq \beta \left(1 + \E\left[ \taur \mid X_0 = (n,1,1,1,1) \right]\right).}
So if we can bound the last quantity by some constant, we are done. This is where Lemma~\ref{lem:constpstrips} comes in. We bound this expectation by ``jumping'' to the closest healthy site, infecting all passive sites on the way. More precisely, we progress the infection by force until reaching a state in $\mathcal G_n$.
		\al{\E\left[ \taur \mid X_0 = (n,1,1,1,1) \right] & \leq \sum_{i=0}^\infty \left( \E\left[ \taur \mid X_0 \in \mathcal G_n, \xi^{\I(0)}(\eta_0) = i+2 \right] + i \right) \p\left[\xi^{\I(0)}(\eta_0) = i+2\right] \\
				& \leq \sum_{i=0}^\infty i \p\left[\xi^{\I(0)}(\eta_0) = i\right] +  \E\left[ \taur \mid X_0 \in \mathcal G_n\right] \sum_{i=0}^\infty \p\left[\xi^{\I(0)}(\eta_0) = i+2\right]  \\
				& \leq \E\left[\xi^{\I(0)}(\eta_0) \right] + \E\left[ \taur \mid X_0 \in \mathcal G_n\right], }
which is bounded by a constant, combining Lemma~\ref{lem:constpstrips} with the fact that the second term goes to $0$ as $q\to 1$.
\end{proof}

\bibliography{bibliography}{}
\bibliographystyle{amsplain}

\end{document}